\documentclass[12pt]{article}
\usepackage[top=3.2cm,bottom=3.2cm,left=2.5cm,right=2.5cm]{geometry}
\usepackage{amssymb}
\usepackage{amsmath,amsthm}
\usepackage[latin1]{inputenc}
\usepackage[dvips]{graphicx}
\usepackage{hyperref}
\usepackage{color}
\usepackage{mathrsfs}
\usepackage{enumerate}
\usepackage{enumitem}
\usepackage{tikz}
\usepackage{xifthen}
\usepackage{verbatim}

\hypersetup{colorlinks=true, linkcolor=blue, citecolor=blue,urlcolor=blue}

\newtheorem{remark}{Remark}[section]

\newtheorem{lemma}[remark]{Lemma}
\newtheorem{theorem}[remark]{Theorem}

\title{Constructive characterizations concerning total outer-independent domination in subdivision trees}
\author{A. Cabrera-Mart\'inez, J.L. L\'opez-Carmona, A. Serrano-D\'iaz\\
	{\small Universidad de C\'ordoba, Departamento de Matem\'aticas} \\
	{\small Campus de Rabanales, 14071, C\'ordoba, Spain.} \\{\small acmartinez@uco.es, 2locaj@uco.es, aserrano1@uco.es}
}

\date{ }
\begin{document}
\maketitle

\begin{abstract}
Let $G$ be a nontrivial connected graph with vertex set $V(G)$. A set of vertices $D\subseteq V(G)$ is called a total outer-independent dominating set of $G$ if every vertex of $G$ is adjacent to at least one vertex in $D$, and  $V(G)\setminus D$ is an independent set of $G$. The total outer-independent domination number of $G$, denoted by $\gamma_t^{oi}(G)$, is the minimum cardinality among all total outer-independent dominating sets of $G$. The subdivision graph of $G$, denoted by $\mathtt{S}(G)$, is the graph obtained from $G$ by subdividing every edge exactly once.
Cabrera-Mart\'inez et al. [On the total outer-independent domination number of subdivision graphs, Comput. Appl. Math. 45 (2026) 315] proved that $\tfrac{4n(T)-l(T)-s(T)}{3}\leq \gamma_{t}^{oi}(\mathtt{S}(T))\leq \tfrac{4n(T)-l(T)+s(T)-2}{3}$ for any nontrivial tree $T$ of order $n(T)$ with $l(T)$ leaves and $s(T)$ support vertices. In this paper, we provide constructive characterizations of the families of trees that attain these bounds.
\end{abstract}

\noindent
{\it Keywords}: total outer-independent domination, subdivision graph, tree.

\noindent
{\it Math. Subj. Class. (2020)}: 05C05, 05C69, 05C75.

\section{Introduction}

Let $G(V(G),E(G))$ be a simple connected graph of order $n(G)=|V(G)|$. We denote its vertex set by $V(G)=\cup_{i\in \{1,\ldots,n(G)\}}\{v_i\}$, and let us define $V_E(G)=\{v^{i,j} : v_iv_j\in E(G)\}$ (remark that $v^{i,j}=v^{j,i}$). Given any vertex $v_i\in V(G)$, $N_G(v_i)$ represents the open neighborhood of vertex $v_i$, that is, $N_G(v_i)=\{v_j\in V(G): v_iv_j\in E(G)\}$. A vertex $v_i\in V(G)$ is called a \emph{leaf} of $G$ if $|N_G(v_i)|=1$, and $v_i$ is a \emph{support vertex} of $G$ if it is adjacent to a leaf. The sets of leaves and support vertices are denoted by $\mathcal{L}(G)$ and $\mathcal{S}(G)$, respectively. Moreover, we write $l(G)=|\mathcal{L}(G)|$ and $s(G)=|\mathcal{S}(G)|$. By convention, for the path $P_2$ we assume that one of its two vertices is a support vertex and the other is a leaf.

\vspace{.2cm}

\noindent
A \emph{total outer-independent dominating set} (TOIDS) of a nontrivial connected graph $G$ is a set $D\subseteq V(G)$ which satisfies that  every vertex of $G$ is adjacent to at least one vertex in $D$ and that  $V(G)\setminus D$ is an independent set of $G$. The \emph{total outer-independent domination number} of $G$ is defined as $\gamma_t^{oi}(G)=\min\{|D|: D \text{ is a TOIDS of } G\}.$
A $\gamma_{t}^{oi}(G)$-\emph{set} is a TOIDS of $G$ with cardinality $\gamma_{t}^{oi}(G)$. 
The concept of total outer-independent domination in graphs was introduced in 2012 by Soner et al.\,\cite{TCOI-Soner}, and has subsequently been investigated in several works, including  \cite{Cabrera-TCID-regular,Cabrera-TCID-product,Cabrera2022,Cabrera2019,Cabrera-TOID-2026,TOID-lowerbound,TOID-upperbound,Li2018,DOID-2021}.

\vspace{.2cm}

\noindent
Given a nontrivial connected graph $G$, the \emph{subdivision graph} $\mathtt{S}(G)$ is obtained from $G$ by subdividing each edge exactly once. Formally,  $V(\mathtt{S}(G))=V(G)\cup V_E(G)$ and $E(\mathtt{S}(G))=\{v_iv^{i,j},v_jv^{i,j}: v^{i,j}\in V_E(G)\}$. By definition, it is easy to check that if $T$ is a tree, then $\mathtt{S}(T)$ is also a tree. In Figure~\ref{fig-1} we show a tree $T$ and its corresponding graph $\mathtt{S}(T)$. For each of these two trees, the set of black vertices
describes a TOIDS of minimum cardinality.

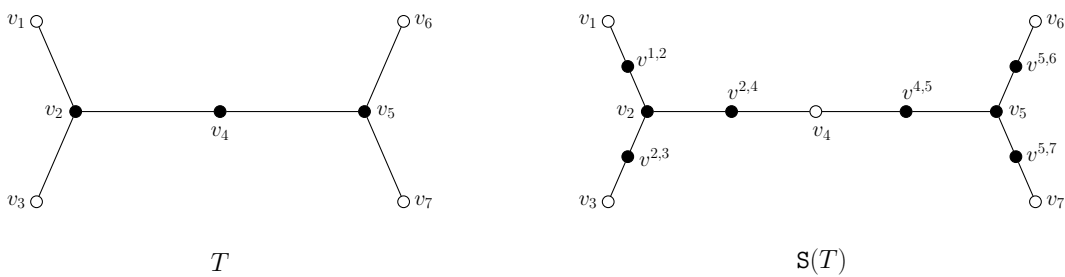
\begin{figure}[h!]
		\centering
		\begin{tikzpicture}[scale=.4, transform shape] 
			\pgfmathsetmacro{\pw}{12}  
			\pgfmathsetmacro{\ph}{3}  
			\pgfmathsetmacro{\tage}{0.6}  
			\pgfmathsetmacro{\ps}{19}  
			\pgfmathsetmacro{\pa}{1.3}  
			\pgfmathsetmacro{\pl}{2}  
			\pgfmathsetmacro{\tx}{0.8}  
			\node[draw, shape=circle, fill = black] (v1) at (\pw/5,1) {};
			\node[draw, shape=circle] (v2) at (\pw/5-\pa,1+\ph) {};
			\node[draw, shape=circle] (v3) at (\pw/5-\pa,1-\ph) {};
			\node[draw, shape=circle, fill = black] (v5) at (3*\pw/5,1) {};
			\node[draw, shape=circle, fill = black] (v7) at (5*\pw/5,1) {};
			\node[draw, shape=circle] (v8) at (5*\pw/5+\pa,1+\ph) {};
			\node[draw, shape=circle] (v9) at (5*\pw/5+\pa,1-\ph) {};
			\draw(v2)--(v1)--(v3);
			\draw(v8)--(v7)--(v9);
			\draw(v1)--(v5)--(v7);	
			\node at (\pw/5-0.9*\pa-\tx,1+\ph) {\LARGE $v_1$};
			\node at (\pw/5-0.9*\tx,1) {\LARGE $v_2$};
			\node at (\pw/5-0.9*\pa-\tx,1-\ph) {\LARGE $v_3$};
			\node at (3*\pw/5,1-0.9*\tx) {\LARGE $v_4$};
			\node at (5*\pw/5+0.9*\tx,1) {\LARGE $v_5$};
			\node at (5*\pw/5+0.9*\pa+\tx,1+\ph) {\LARGE $v_6$};
			\node at (5*\pw/5+0.9*\pa+\tx,1-\ph) {\LARGE $v_7$};
			\node at (3*\pw/5,-1-\ph) {\Huge $T$};
			\node[draw, shape=circle] (w1) at (\pw/5-\pa+\ps,1+\ph) {};
			\node[draw, shape=circle, fill = black] (w2) at (\pw/5+\ps,1) {};
			\node[draw, shape=circle] (w3) at (\pw/5-\pa+\ps,1-\ph) {};
			\node[draw, shape=circle] (w5) at (3*\pw/5+\ps+2*\pl/5,1) {};
			\node[draw, shape=circle, fill = black] (w7) at (5*\pw/5+\ps+\pl,1) {};
			\node[draw, shape=circle] (w8) at (5*\pw/5+\ps+\pl+\pa,1+\ph) {};
			\node[draw, shape=circle] (w9) at (5*\pw/5+\ps+\pl+\pa,1-\ph) {};
			\node[draw, shape=circle, fill = black] (w12) at (\pw/5+\ps-0.5*\pa,1+0.5*\ph) {};
			\node[draw, shape=circle, fill = black] (w23) at (\pw/5-0.5*\pa+\ps,1-0.5*\ph) {};
			\node[draw, shape=circle, fill = black] (w25) at (2*\pw/5+\ps+\pl/5,1) {};
			\node[draw, shape=circle, fill = black] (w57) at (4*\pw/5+\ps+7*\pl/10,1) {};
			\node[draw, shape=circle, fill = black] (w78) at (5*\pw/5+\ps+\pl+0.5*\pa,1+0.5*\ph) {};
			\node[draw, shape=circle, fill = black] (w79) at (5*\pw/5+\ps+\pl+0.5*\pa,1-0.5*\ph) {};
			\draw(w1)--(w12)--(w2)--(w23)--(w3);
			\draw(w8)--(w78)--(w7)--(w79)--(w9);
			\draw(w2)--(w25)--(w5)--(w57)--(w7);		
			\node at (\pw/5-0.9*\pa+\ps-\tx,1+\ph) {\LARGE $v_1$};
			\node at (\pw/5+\ps-0.9*\tx,1) {\LARGE $v_2$};
			\node at (\pw/5-0.9*\pa+\ps-\tx,1-\ph) {\LARGE $v_3$};
			\node at (3*\pw/5+\ps+4*\pl/8,1-0.9*\tx) {\LARGE $v_4$};
			\node at (5*\pw/5+\ps+\pl+0.9*\tx,1) {\LARGE $v_5$};
			\node at (5*\pw/5+\ps+\pl+0.9*\pa+\tx,1+\ph) {\LARGE $v_6$};
			\node at (5*\pw/5+\ps+\pl+0.9*\pa+\tx,1-\ph) {\LARGE $v_7$};
			\node at (\pw/5+\ps-0.5*\pa+\tx,1+0.6*\ph) {\LARGE $v^{1,2}$};
			\node at (\pw/5+0.8*\pa+\ps-\tx,1-0.5*\ph) {\LARGE $v^{2,3}$};
			\node at (2*\pw/5+1.02*\ps+\pl/5,1+0.9*\tx) {\LARGE $v^{2,4}$};
			\node at (4*\pw/5+1.02*\ps+7*\pl/10,1+0.9*\tx) {\LARGE $v^{4,5}$};
			\node at (5*\pw/5+\ps+\pl+0.55*\pa+\tx,1+0.55*\ph) {\LARGE $v^{5,6}$};
			\node at (5*\pw/5+\ps+\pl+0.55*\pa+\tx,1-0.45*\ph) {\LARGE $v^{5,7}$};
			\node at (3*\pw/5+\ps+4*\pl/8,-1-\ph) {\Huge $\mathtt{S}(T)$};
		\end{tikzpicture}
		\caption{A tree $T$, and the corresponding tree $\mathtt{S}(T)$.}\label{fig-1}
	\end{figure}

\noindent
Recently, Cabrera-Mart\'inez et al. \cite{Cabrera-TOID-2026} provided the following result, which gives lower and upper bounds for the total outer-independent domination number of the subdivision of a nontrivial tree.

\begin{theorem}{\rm \cite{Cabrera-TOID-2026}}\label{theo-tree}
For any nontrivial tree $T$, 
$$\frac{4n(T)-l(T)-s(T)}{3}\leq \gamma_{t}^{oi}(\mathtt{S}(T))\leq \frac{4n(T)-l(T)+s(T)-2}{3}.$$
\end{theorem}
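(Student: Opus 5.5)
We prove both inequalities by induction on $n(T)$, after first fixing the structure of a $\gamma_t^{oi}(\mathtt{S}(T))$-set. Write $D_V=D\cap V(T)$, $D_E=D\cap V_E(T)$, $A=V(T)\setminus D_V$ and $B=V_E(T)\setminus D_E$. Since $\mathtt{S}(T)$ has $2n(T)-1$ vertices,
\[
|D|=2n(T)-1-|A|-|B|.
\]
Outer-independence means two things. If $v^{i,j}\in B$, then $v_i,v_j\in D_V$. If $v_i\in A$, then every subdivision vertex incident with $v_i$ lies in $D_E$.

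Total domination also gives two conditions. Every vertex of $D_V$ has an incident subdivision vertex in $D_E$. Every vertex of $D_E$ has an endpoint in $D_V$.

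Every leaf $u$ of $T$ is a leaf of $\mathtt{S}(T)$. So its unique neighbour $v^{u,s}$ lies in $D$, and for $v^{u,s}$ to be dominated its support vertex $s$ must lie in $D$. After a standard exchange we may also assume that no leaf of $T$ lies in $D$.

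The base cases are stars and double stars, where both bounds can be checked directly. For the induction step, root $T$ at an end of a longest path $v_0v_1\cdots v_d$ with $d\ge 4$. We then argue about $v_{d-1}$ (a support vertex all of whose children are leaves) and $v_{d-2}$.

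\textbf{Lower bound.} We remove the branch $T_{v_{d-1}}$, or $T_{v_{d-2}}$ when $v_{d-2}$ itself has leaf children, to obtain a tree $T'$. We then restrict a $\gamma_t^{oi}(\mathtt{S}(T))$-set to $\mathtt{S}(T')$, repairing it by adding at most one vertex when the cut edge's subdivision vertex was the only dominator. A count gives $|D|\ge\gamma_t^{oi}(\mathtt{S}(T'))+c$, where $c$ is the exact local contribution. For example, deleting $v_{d-1}$ with its $k$ leaves removes $2k+2$ vertices of $\mathtt{S}(T)$, of which the $k$ leaves and possibly one more can be outside $D$. We check in each case that $c$ is at least the change of $(4n-l-s)/3$.

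The key quantity is the change in $s(T)$ and $l(T)$: deleting the branch may turn $v_{d-2}$ into a leaf or a support vertex. The cases are:
\begin{itemize}
\item $\deg_T(v_{d-2})=2$;
\item $v_{d-2}$ has other children, all of which are support vertices;
\item $v_{d-2}$ is itself a support vertex.
\end{itemize}

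\textbf{Upper bound.} The same inductive surgery is used in the opposite direction: we extend a $\gamma_t^{oi}(\mathtt{S}(T'))$-set to $\mathtt{S}(T)$. When the removed piece is $v_{d-1}$ with its $k$ leaves, we add $v_{d-1}$, all $v^{v_{d-1},u}$, and possibly $v^{v_{d-2},v_{d-1}}$. We then verify that the number of added vertices is at most the increase of $(4n-l+s-2)/3$. The $-2$ term and the $+s$ term absorb the one extra vertex needed exactly when $v_{d-2}$ does not stay a support vertex. An alternative sanity check is an explicit construction:
\begin{itemize}
\item all non-leaf vertices of $T$ go into $D$;
\item all subdivision vertices incident with leaves go into $D$;
\item on the remaining internal edges, we choose $D_E$ as a minimal set of subdivision vertices so that every internal vertex has an incident one in $D_E$.
\end{itemize}
This is an edge cover of the internal tree $T-\mathcal{L}(T)$ in which support vertices are already covered. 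Bounding that edge cover by roughly $\frac{n(T)-l(T)+s(T)}{3}$-type counts is the crux.

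The main obstacle is the bookkeeping in the lower bound when the removed branch changes $s$ by one but changes $l$ by one in the other direction. There the fractional $1/3$ slack must come precisely from the fact that both $v_{d-1}$ and at least one subdivision vertex on its parent edge must be in $D$. If the case split on $v_{d-2}$ does not close, the first thing to try is removing a larger piece: the whole subtree at $v_{d-3}$ when $v_{d-2}$ has degree $2$, which contributes exactly $4$ vertices of $\mathtt{S}(T)$ to $D$ against a change of $4\cdot 3/3$ in the bound.
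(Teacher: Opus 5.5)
The paper does not prove this theorem; it quotes it from the earlier reference. So I judge your proposal on its own merits, and it has a genuine gap. It is a plan whose case analysis is never carried out, and the concrete steps it commits to fail on the trees $Q_r$, $r\ge 2$ (e.g.\ $Q_2=P_7$).

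\textbf{The obstruction $Q_r$.} Each of the $r$ six-vertex branches of $\mathtt{S}(Q_r)$ at the central vertex $c$ needs four vertices of any TOIDS. The value $4r$ is attained only with $c\notin D$, so $\gamma_t^{oi}(\mathtt{S}(Q_r))=4r$. Every diametral path passes through $c$, so $k=1$, $\deg_T(v_{d-2})=2$ and $v_{d-3}=c$. Deleting $T_{v_{d-1}}$ gives a tree $T'$ in which $v_{d-2}$ is a leaf and $c$ is a new support. Hence $l$ and $s$ are unchanged, and $\gamma_t^{oi}(\mathtt{S}(T'))=4r-2$.
\begin{itemize}
\item \emph{Lower bound.} $\tfrac{4n-l-s}{3}$ rises by $\tfrac{8}{3}$, but $\gamma_t^{oi}$ rises by only $2$. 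So no local contribution $c$ for this deletion can meet your requirement.
\item \emph{Upper bound.} Your leaf-free normalisation forces $v_{d-2}\notin D'$, so you must add $v_{d-1},v^{d-1,d},v^{d-2,d-1}$ and reach $4r+1$. The bound demands $\gamma_t^{oi}(\mathtt{S}(Q_r))\le\lfloor 4r+\tfrac{2}{3}\rfloor=4r$. In general, whenever $\deg_T(v_{d-2})=2$ and $v_{d-3}\notin\mathcal{S}(T)$, you pay $k+2$ while the bound rises by only $k+\tfrac{5}{3}$. The constant $-2$ cancels in $g(T)-g(T')$ and $s$ does not drop, so nothing absorbs the extra vertex.
\item \emph{Explicit construction.} Putting every non-leaf vertex of $Q_r$ into $D$ costs $(2r+1)+r+r=4r+1$. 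So the edge-cover estimate you call the crux is false.
\item \emph{Fallback.} Read your fallback as cutting the pendant path $v_{d-2}v_{d-1}v_d$, which is what your count of $4$ refers to. It gives ``$4$ against $4$'' only when $v_{d-3}$ becomes a weak leaf. If $\deg_T(v_{d-3})\ge 3$, the lower bound rises by $\tfrac{10}{3}$, yet $\gamma_t^{oi}$ can rise by only $3$. For example, attaching a pendant $P_3$ to the centre of $P_5$ gives a tree $T$ with $\gamma_t^{oi}(\mathtt{S}(T))=9$, while $\gamma_t^{oi}(\mathtt{S}(P_5))=6$.
\end{itemize}

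\textbf{A working upper bound.} The induction closes with a different split, using only the easy halves of Lemma~\ref{lem-tree-attaching}(i)--(iii).
\begin{itemize}
\item If $k\ge 2$, delete one leaf: cost $1$, rise $1$.
\item If $k=1$ and $\deg_T(v_{d-2})\ge 3$, delete $v_{d-1},v_d$. Extend by $v_{d-1},v^{d-1,d}$ a $\gamma_t^{oi}(\mathtt{S}(T'))$-set containing $v_{d-2}$. Such a set exists because $v_{d-2}\in\mathcal{S}(T')\cup\mathcal{SS}(T')$ (Lemma~\ref{lem-tree}, or a one-step exchange). Cost $2$, rise $\tfrac{8}{3}$.
\item If $k=1$ and $\deg_T(v_{d-2})=2$, delete the whole pendant path and add $v^{d-3,d-2},v_{d-2},v_{d-1},v^{d-1,d}$. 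Cost $4$, rise at least $4$.
\end{itemize}

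\textbf{A working lower bound.} A direct count is cleaner than induction. For $n\ge 3$, take a minimum $D$ with no leaves, so $\mathcal{S}(T)\subseteq D$. Then $A_0=V(T)\setminus(D\cup\mathcal{L}(T))$ is independent in $T$ and contains no support. Let $D^*$ be the set of non-support vertices of $D\cap V(T)$ with no neighbour in $A_0$. Count three disjoint groups of subdivision vertices in $D$:
\begin{itemize}
\item the $l$ pendant ones;
\item the $\sum_{a\in A_0}\deg_T(a)$ incident with $A_0$;
\item at least $\tfrac{1}{2}|D^*|$ further ones needed to dominate $D^*$.
\end{itemize}
This gives $|D|\ge n-|A_0|+\sum_{a\in A_0}\deg_T(a)+\tfrac{1}{2}|D^*|$. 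Moreover, $n-l-s\le |A_0|+\sum_{a\in A_0}\deg_T(a)+|D^*|$. Combining the two, $3|D|\ge 4n-l-s$ reduces to $2\sum_{a\in A_0}(\deg_T(a)-2)+\tfrac{1}{2}|D^*|\ge 0$, which holds because every $a\in A_0$ is internal.
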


\noindent
In this paper, we provide constructive characterizations of the families of trees that attain the bounds given in the previous theorem.

\subsection{Additional notation and terminology, and some useful tools}

 We use the notation $P_r$ to denote the path graph of order $r$. Given a graph $G$, by \emph{attaching} a path $P_r$ to a vertex $v_i\in V(G)$ we mean adding the path $P_r$ and joining $v_i$ by an edge to a leaf of $P_r$. Given a set $S\subset V(G)$, $G-S$ denotes the graph obtained from $G$ by removing all  vertices in $S$ together with all edges incident to them. Next, we define some sets that are relevant to our work.
 
\begin{itemize}
\item $\mathcal{S}_s(G)=\{v_i\in \mathcal{S}(G): |N_G(v_i)\cap \mathcal{L}(G)|\geq 2\}$ is the set of \emph{strong support vertices} of $G$.
\item $\mathcal{L}_s(G)=\{v_i\in \mathcal{L}(G): |N_G(v_i)\cap \mathcal{S}_s(G)|=1\}$ is the set of \emph{strong leaves} of $G$.
\item $\mathcal{L}_w(G)=\mathcal{L}(G)\setminus \mathcal{L}_s(G)$ is the set of \emph{weak leaves} of $G$.
\item $\mathcal{SS}(G)=\{v_i\in V(G)\setminus (\mathcal{S}(G)\cup \mathcal{L}(G)): |N_G(v_i)\cap \mathcal{S}(G)|\geq 1\}$ is the set of \emph{semi-support vertices} of $G$.
\item $\mathcal{N}_{SS}(G)=\{v_i\in V(G)\setminus (\mathcal{SS}(G)\cup \mathcal{S}(G)) : |N_G(v_i)\cap \mathcal{SS}(G)|\ge 1\}$ is the set of vertices adjacent to semi-support vertices of $G$ that are neither semi-support nor support vertices.
\end{itemize}

\noindent
A \emph{tree} $T$ is an  acyclic connected graph. A \emph{rooted tree} $T$ is a tree with a distinguished vertex $v_r$, called the root. Let $v_i\in V(T)\setminus \{v_r\}$. A \emph{descendant} of $v_i$ is a vertex $v_j\neq v_i$ such that the unique $v_r-v_j$ path contains $v_i$. The set of descendants of $v_i$ is denoted by $D(v_i)$. The \emph{maximal subtree} at $v_i$ is the subtree of $T$ induced by $D(v_i)\cup\{v_i\}$ and is denoted by $T_{v_i}$.

\vspace{.2cm}

\noindent
For an integer $r\geq 2$, the tree $Q_r$ is defined as the tree obtained from one copy of $P_1$ and $r$ copies of $P_3$ by joining, with an edge, one leaf of each copy of $P_3$ to the vertex of $P_1$. The unique vertex in $\mathcal{N}_{SS}(Q_r)$ is called the \emph{central vertex} of $Q_r$. In Figure~\ref{fig-Q} we show the tree $Q_3$ and its central vertex $v_1$.

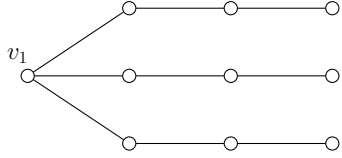
\begin{figure}[h!]
\centering
\begin{tikzpicture}[scale=.45, transform shape] 

\node[draw, shape=circle] (v) at (-1.5,0) {};
\node[draw, shape=circle] (v1) at (1.5,0) {};
\node[draw, shape=circle] (v11) at (4.5,0) {};
\node[draw, shape=circle] (v111) at (7.5,0) {};
\node[draw, shape=circle] (v2) at (1.5,2) {};
\node[draw, shape=circle] (v22) at (4.5,2) {};
\node[draw, shape=circle] (v222) at (7.5,2) {};
\node[draw, shape=circle] (v3) at (1.5,-2) {};
\node[draw, shape=circle] (v33) at (4.5,-2) {};
\node[draw, shape=circle] (v333) at (7.5,-2) {};

\node at (-1.8,0.6) {\LARGE $v_1$};
			
\draw(v)--(v1)--(v11)--(v111);
\draw(v)--(v2)--(v22)--(v222);
\draw(v)--(v3)--(v33)--(v333);		
			
\end{tikzpicture}
\caption{The tree $Q_3$ and its central vertex $v_1$.}\label{fig-Q}
\end{figure}

\noindent
We conclude this section with the following lemmas, which are useful tools in the proofs of the characterizations.

\begin{lemma}{\rm \cite{Cabrera-TOID-2026}}\label{lem-tree}
For any tree $T$ of order at least three, there exists a $\gamma_{t}^{oi}(\mathtt{S}(T))$-set $D$ such that $\mathcal{S}(T)\cup \mathcal{SS}(T)\subseteq D$.  
\end{lemma}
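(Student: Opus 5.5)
We argue by exchange. Start from an arbitrary $\gamma_{t}^{oi}(\mathtt{S}(T))$-set $D$ and modify it, without increasing its size, until it contains $\mathcal{S}(T)\cup\mathcal{SS}(T)$; at the end the size must still equal $\gamma_{t}^{oi}(\mathtt{S}(T))$. Two features of $\mathtt{S}(T)$ drive the argument. First, in $\mathtt{S}(T)$ every original vertex $v_i$ is adjacent only to subdivision vertices $v^{i,j}$, and every $v^{i,j}$ has exactly the two neighbours $v_i,v_j$. Second, for a leaf $v_i$ of $T$ with support $v_j$, the vertex $v_i$ is a leaf of $\mathtt{S}(T)$ whose unique neighbour is $v^{i,j}$. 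Hence $v^{i,j}\in D$ is forced. Moreover, $v^{i,j}$ is itself dominated only by $v_i$ or $v_j$, so $D\cap\{v_i,v_j\}\neq\emptyset$. We also note that the order of $T$ being at least three guarantees that supports and leaves are distinct, and that no vertex is simultaneously a leaf and a support.

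\textbf{Support vertices.} Suppose a support vertex $v_j\notin D$. Then every leaf neighbour $v_i$ of $v_j$ lies in $D$. Consider $D'=(D\setminus\{v_i\})\cup\{v_j\}$ for one such leaf $v_i$.
\begin{itemize}
\item Independence of the complement: $v_i$ is now outside $D'$, and its only neighbour $v^{i,j}$ lies in $D'$. The vertex $v_j$ has left the complement, so this only helps.
\item Domination: $v^{i,j}$ is dominated by $v_j$. Any other vertex dominated by $v_i$ is a neighbour of $v_i$, and $v_i$ has only the neighbour $v^{i,j}$. The vertex $v_i$ itself is dominated by $v^{i,j}\in D'$.
\end{itemize}
So $D'$ is a TOIDS of the same size. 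We repeat this for every support vertex; the swaps never remove a support vertex, so after finitely many steps $\mathcal{S}(T)\subseteq D$.

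\textbf{Semi-support vertices.} Take $v_k\in\mathcal{SS}(T)\setminus D$ with a support neighbour $v_j$; note $v_j\in D$ already. Since $v_k\notin D$, independence of $V\setminus D$ forces every $v^{k,m}\in D$. The vertex $v^{j,k}$ is dominated by $v_j$. So the subdivision vertex $v^{j,k}$ is needed only to dominate $v_k$ or $v_j$.
\begin{itemize}
\item \emph{Swap $v^{j,k}$ out.} Try $D'=(D\setminus\{v^{j,k}\})\cup\{v_k\}$. The complement stays independent, because the neighbours of $v^{j,k}$, namely $v_j$ and $v_k$, are both in $D'$. The vertex $v^{j,k}$ is dominated by $v_j$. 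Each $v^{k,m}$ is dominated by $v_k$ now. The vertex $v_k$ is dominated by any other $v^{k,m}\in D'$; such an $m$ exists because $v_k$ is not a leaf, so $\deg v_k\ge 2$.
\item \emph{The problem case.} The only vertex that might lose domination is $v_j$, in the case where $v^{j,k}$ was its unique $D$-neighbour. This means every other neighbour $v^{j,\ell}$ of $v_j$ is outside $D$. In particular, for a leaf $\ell$ of $v_j$ this contradicts $v^{j,\ell}\in D$. So $v^{j,\ell}\in D$ dominates $v_j$, and the problem case cannot occur.
\end{itemize}
Thus $D'$ is a TOIDS of equal size. The swap removes only a subdivision vertex, so it does not destroy previously established memberships, and iterating over all semi-support vertices gives $\mathcal{S}(T)\cup\mathcal{SS}(T)\subseteq D$.

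The main obstacle is the bookkeeping: each exchange must keep $|D|$ unchanged and must not undo an earlier exchange. Ordering the exchanges as above (supports first, then semi-supports, only ever removing leaves or subdivision vertices) is the plan to control this.
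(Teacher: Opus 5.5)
Your proof is correct. The paper does not prove this lemma itself; it is quoted from the earlier work of Cabrera-Mart\'inez et al., so there is no in-paper argument to compare against, and your two-phase exchange is a complete, self-contained proof.

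The two phases check out as follows.
\begin{itemize}
\item \textbf{Support phase.} You replace a leaf $v_i\in D$ by its support $v_j\notin D$. This keeps $|D|$ unchanged and keeps $V(\mathtt{S}(T))\setminus D$ independent, since $v_i$'s only neighbour $v^{i,j}$ remains in $D$. It also preserves total domination, because only $v^{i,j}$ could have depended on $v_i$, and it is now dominated by $v_j$.
\item \textbf{Semi-support phase.} You replace $v^{j,k}$ by the semi-support $v_k$, and all checks again go through. Independence holds because both ends of $v^{j,k}$ now lie in the set. The new vertex $v_k$ is dominated by some other $v^{k,m}\in D$, which exists because $\deg_T v_k\ge 2$. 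The only delicate vertex is $v_j$, which is dominated by the forced vertex $v^{j,\ell}$ for a leaf neighbour $v_\ell$ of $v_j$.
\end{itemize}
The fact that $v^{j,\ell}\neq v^{j,k}$, i.e.\ $\ell\neq k$, holds precisely because $v_k\notin\mathcal{L}(T)$. You use this implicitly and could state it explicitly.

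Your ordering guarantees termination. Support swaps remove only leaves and semi-support swaps remove only subdivision vertices, so no previously secured vertex of $\mathcal{S}(T)\cup\mathcal{SS}(T)$ is ever lost.

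This exchange style is the same one the paper relies on implicitly whenever it chooses a $\gamma_{t}^{oi}$-set minimizing or maximizing its intersection with a small vertex set, as in the proof of Lemma~\ref{lem-tree-attaching}.
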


\begin{lemma}\label{lem-tree-attaching}
The following statements hold for any nontrivial tree $T$.
\begin{enumerate}
\item[{\rm (i)}] If $T$ is obtained from any nontrivial tree $T'$  by attaching a path $P_1$ to any vertex in $\mathcal{S}(T')$, then $\gamma_{t}^{oi}(\mathtt{S}(T))=\gamma_{t}^{oi}(\mathtt{S}(T'))+1$.
\item[{\rm (ii)}] If $T$ is obtained from any nontrivial tree $T'$ by attaching a path $P_2$ to any vertex in $ \mathcal{S}(T')\cup \mathcal{SS}(T')$, then $\gamma_{t}^{oi}(\mathtt{S}(T))=\gamma_{t}^{oi}(\mathtt{S}(T'))+2$.
\item[{\rm (iii)}] If $T$ is obtained from any nontrivial tree $T'$ by attaching a path $P_3$ to any vertex in $\mathcal{S}(T')\cup\mathcal{L}_w(T')$, then $\gamma_{t}^{oi}(\mathtt{S}(T))=\gamma_{t}^{oi}(\mathtt{S}(T'))+4$.
\item[{\rm (iv)}] If $T$ is obtained from any nontrivial tree $T'$ by identifying the central vertex of the tree $Q_r$ with a vertex in $\mathcal{L}_w(T')$, then $\gamma_{t}^{oi}(\mathtt{S}(T))=\gamma_{t}^{oi}(\mathtt{S}(T'))+4r$.
\end{enumerate}
\end{lemma}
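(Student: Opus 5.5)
For each of the four items we prove two inequalities. The upper bound $\gamma_t^{oi}(\mathtt{S}(T))\le \gamma_t^{oi}(\mathtt{S}(T'))+k$ comes from extending a $\gamma_t^{oi}(\mathtt{S}(T'))$-set. The lower bound comes from restricting a suitably chosen $\gamma_t^{oi}(\mathtt{S}(T))$-set to $\mathtt{S}(T')$ and repairing it. Throughout, let $v_i$ be the vertex of $T'$ at which the attachment is made. For the lower bounds we use Lemma~\ref{lem-tree}: since $T$ has order at least three, there is a $\gamma_t^{oi}(\mathtt{S}(T))$-set $D$ with $\mathcal{S}(T)\cup\mathcal{SS}(T)\subseteq D$. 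We also use that in $\mathtt{S}(T)$ a leaf $v_j$ has a unique neighbour $v^{j,k}$, which must lie in $D$. Together with outer-independence this forces many subdivision vertices into $D$.

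\textbf{Item (i).} Let $v_j$ be the new leaf.

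For the upper bound, let $D'$ be a $\gamma_t^{oi}(\mathtt{S}(T'))$-set containing $v_i$. Such a set exists: since $v_i\in\mathcal{S}(T')$, either $T'$ has order at least three and Lemma~\ref{lem-tree} applies, or $T'=P_2$, which is checked directly. Then $D'\cup\{v^{i,j}\}$ is a TOIDS of $\mathtt{S}(T)$.

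For the lower bound, $v_i\in\mathcal{S}(T)\subseteq D$ and $v^{i,j}\in D$. Then $D\setminus\{v^{i,j}\}$ is a TOIDS of $\mathtt{S}(T')$: the only vertex that could lose its dominator is $v_i$, and $v_i$ has a leaf neighbour $v_l$ in $T'$ with $v^{i,l}\in D$.

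\textbf{Item (ii).} Attach the path $v_jv_k$ with $v_iv_j\in E(T)$.

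For the upper bound, take $D'$ containing $v_i$; this uses $v_i\in\mathcal{S}(T')\cup\mathcal{SS}(T')$ and Lemma~\ref{lem-tree}, with small $T'$ handled directly. Add $v^{j,k}$ and $v_j$. Then $v^{i,j}$ is dominated by $v_i$, its neighbours $v_i$ and $v_j$ lie in the set so independence holds, and $v_j$ is dominated by $v^{j,k}$.

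For the lower bound, $v_j\in\mathcal{S}(T)\subseteq D$ and $v^{j,k}\in D$, and $v_i\in\mathcal{SS}(T)\subseteq D$. Remove $v_j$, $v^{j,k}$, and $v^{i,j}$ if present. The vertex $v_i$ must still be dominated inside $\mathtt{S}(T')$. We check this via its structure in $T'$: if $v_i$ is a support vertex this is as in (i); if it is a semi-support vertex, then a neighbour $v_l\in\mathcal{S}(T')$ lies in $D$ and $v^{i,l}$ is forced into $D$ by independence, unless $v_i\in D$ is handled by swapping. Here we may need to choose $D$ so that $v^{i,l}\in D$. This gives at least $2$ removed vertices.

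\textbf{Items (iii) and (iv).} The new gadget $v_i$--$v_a$--$v_b$--$v_c$ of a $P_3$ contributes the subdivision path $v_i,v^{i,a},v_a,v^{a,b},v_b,v^{b,c},v_c$. In it, $v^{b,c}\in D$ (leaf neighbour), $v_b\in D$ (support), and $v_a\in D$ (semi-support). Also at least one of $v^{a,b},v^{i,a}$ must be dominated and independence around $v_a$ matters; a count shows that at least $4$ vertices among $\{v^{i,a},v_a,v^{a,b},v_b,v^{b,c}\}$ must be in $D$.

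For the upper bound, add $\{v_a,v^{a,b},v_b,v^{b,c}\}$ to a $\gamma_t^{oi}(\mathtt{S}(T'))$-set. Then $v^{i,a}$ lies outside the set, so we need $v_i\in D'$ for independence and domination of $v^{i,a}$. If $v_i\in\mathcal{S}(T')$, Lemma~\ref{lem-tree} gives this. If $v_i\in\mathcal{L}_w(T')$, we need a $\gamma_t^{oi}$-set containing the weak leaf $v_i$. This is \textbf{the main obstacle}. A leaf $v_i$ with support $v_s$ has $v^{i,s}\in D'$. Since $v_s$ is a weak support, we try to swap $v_s$ out for $v_i$ if $v_s$ has no other leaf; otherwise we must argue using the strong-leaf condition that a minimum set avoiding $v_i$ can be modified. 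I would analyse the neighbourhood of $v_s$ carefully here.

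For the lower bound, in $D$ at least $4$ vertices lie in the new gadget. If $v^{i,a}\in D$, we delete the gadget vertices. If $v^{i,a}\notin D$, then $v_i\in D$, and deleting the $4$ gadget vertices leaves a TOIDS of $\mathtt{S}(T')$ provided $v_i$ remains dominated via $v^{i,s}$, which is forced. Item (iv) is the same argument run on $r$ copies simultaneously, with $v_i$ being a weak leaf of $T'$ becoming the central vertex.
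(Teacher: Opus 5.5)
Your arguments for (i) and for the upper bound in (ii) are the paper's. However, the upper bound in (iii) and (iv) for $v_i\in\mathcal{L}_w(T')$ is a genuine gap, and the route you sketch cannot close it. Your extension $\{v_a,v^{a,b},v_b,v^{b,c}\}$ leaves $v^{i,a}$ outside the set, so you need a $\gamma_t^{oi}(\mathtt{S}(T'))$-set containing the weak leaf $v_i$. Such a set need not exist. Take $T'=P_4=w_1w_2w_3w_4$, and write $w^{p,q}$ for the subdivision vertex of $w_pw_q$. Here $w_1\in\mathcal{L}_w(T')$ and $\gamma_t^{oi}(\mathtt{S}(T'))=4$, since $\{w^{1,2},w_2,w_3,w^{3,4}\}$ attains the lower bound of Theorem~\ref{theo-tree}. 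Every minimum set contains $w^{1,2}$ and $w^{3,4}$. If it also contains $w_1$, its fourth vertex must dominate both $w^{2,3}$ and $w^{3,4}$, so it is $w_3$; then the adjacent vertices $w_2,w^{2,3}$ both lie outside the set. So no analysis of the neighbourhood of $v_s$ will help.

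The missing idea is to choose the extension differently: add $\{v^{i,a},v_a,v_b,v^{b,c}\}$. The only new vertices left outside are $v^{a,b}$ and $v_c$, and all their neighbours are in the set. Every new vertex is dominated, and the old vertex $v_i$ gains the neighbour $v^{i,a}$ in the set. Hence this extends \emph{every} $\gamma_t^{oi}(\mathtt{S}(T'))$-set, with no condition on $v_i$. For (iv), add this $4$-set once for each of the $r$ legs of $Q_r$. This is exactly what the paper does.

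Your lower bound for (iii) and (iv) also never checks that $v_i$ stays dominated in $\mathtt{S}(T')$ in the case $v^{i,a}\in D$. Moreover, your claim that $v^{i,s}$ is forced when $v^{i,a}\notin D$ does not transfer to (iv): there $v_i$ may be dominated by the subdivision vertex of another leg.

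The repair is uniform. Delete the gadget vertices of $D$ (at least $4$, resp.\ $4r$). If $v_i\in\mathcal{S}(T')$, its leaf in $T'$ supplies a dominator. If $v_i\in\mathcal{L}_w(T')$ has support $v_s$ and $v^{i,s}\notin D$, then $v_i,v_s\in D$ by outer-independence. Replacing $v_i$ by $v^{i,s}$ then yields a TOIDS of $\mathtt{S}(T')$ of the same size; the paper obtains this from its extremal choice of $B''$ and $B'''$.

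Similarly, in (ii) the vertex $v^{i,l}$ is not forced into $D$ by independence when $v_i,v_l\in D$. The correct step is a swap. If $v^{i,j}$ is the only dominator of $v_i$, replace it by $v^{i,m}$ for some $v_m\in N_{T'}(v_i)$. This gives another minimum TOIDS, because $v_j$ remains dominated by $v^{j,k}$ and $v^{i,j}$ has both neighbours in the set.
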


\begin{proof}
If $n(T')=2$, then $T'\cong P_2$, and it is straightforward to verify that (i), (ii), (iii) and (iv) hold. Hence, we consider from now on that $n(T')\geq 3$.

\vspace{.1cm}

\noindent
First, we assume that $T$ is obtained from $T'$ by adding the vertex $v_1$ and the edge $v_1v_k$, where $v_k\in \mathcal{S}(T')$. By Lemma~\ref{lem-tree}, there exists a $\gamma_{t}^{oi}(\mathtt{S}(T'))$-set $D$ containing vertex $v_k$. Observe that $D\cup \{v^{1,k}\}$ is a TOIDS of $\mathtt{S}(T)$. Hence, $\gamma_{t}^{oi}(\mathtt{S}(T))\leq |D\cup \{v^{1,k}\}|=\gamma_{t}^{oi}(\mathtt{S}(T'))+1$. Let $B$ be a $\gamma_{t}^{oi}(\mathtt{S}(T))$-set which satisfies the condition given in Lemma~\ref{lem-tree}. This implies that $v_k,v^{1,k}\in B$. Since $v_k\in \mathcal{S}(T')$, there exists a vertex $v_l\in N_{T'}(v_k)\cap \mathcal{L}(T')$. Hence, $v^{k,l}\in B$, which implies that $B\setminus \{v^{1,k}\}$ is a TOIDS of $\mathtt{S}(T')$. Thus, $\gamma_{t}^{oi}(\mathtt{S}(T'))\leq |B\setminus \{v^{1,k}\}|=\gamma_{t}^{oi}(\mathtt{S}(T))-1$. Therefore, $\gamma_{t}^{oi}(\mathtt{S}(T))=\gamma_{t}^{oi}(\mathtt{S}(T'))+1$, which completes the proof of (i).

\vspace{.1cm}

\noindent
Next, we assume that $T$ is obtained from $T'$ by adding the path $v_1v_2$ and the edge $v_1v_k$, where $v_k\in \mathcal{S}(T')\cup \mathcal{SS}(T')$. By Lemma~\ref{lem-tree}, there exists a $\gamma_{t}^{oi}(\mathtt{S}(T'))$-set $D'$ containing vertex $v_k$. Observe that $D'\cup \{v_1,v^{1,2}\}$ is a TOIDS of $\mathtt{S}(T)$. Hence, $\gamma_{t}^{oi}(\mathtt{S}(T))\leq |D'\cup \{v_1,v^{1,2}\}|=\gamma_{t}^{oi}(\mathtt{S}(T'))+2$. Let $B'$ be a $\gamma_{t}^{oi}(\mathtt{S}(T))$-set which satisfies the condition given in Lemma~\ref{lem-tree}, and without loss of generality, assume that $|B'\cap \{v_k,v^{1,k},v_1,v^{1,2},v_2\}|$ is minimum. Since $v_1,v_k\in \mathcal{S}(T)\cup \mathcal{SS}(T)$, it follows that $v_1,v_k\in B'$. Moreover, by the minimality of $|B'\cap \{v_k,v^{1,k},v_1,v^{1,2},v_2\}|$, we have that $v^{1,2}\in B'$ and $v^{1,k},v_2\notin B'$. This implies that $B'\setminus \{v_1,v^{1,2}\}$ is a TOIDS of $\mathtt{S}(T')$, and as a consequence, $\gamma_{t}^{oi}(\mathtt{S}(T'))\leq \gamma_{t}^{oi}(\mathtt{S}(T))-2$. Therefore, $\gamma_{t}^{oi}(\mathtt{S}(T))=\gamma_{t}^{oi}(\mathtt{S}(T'))+2$, which completes the proof of (ii).

\vspace{.1cm}

\noindent
Now, assume that $T$ is obtained from $T'$ by adding the path $v_1v_2v_3$ and the edge $v_1v_k$, where $v_k\in \mathcal{S}(T')\cup\mathcal{L}_w(T')$. Note that any $\gamma_{t}^{oi}(\mathtt{S}(T'))$-set $D''$ can be extended to a TOIDS of $\mathtt{S}(T)$ by adding the set $\{v^{1,k},v_1,v_2,v^{2,3}\}$. Hence, $\gamma_{t}^{oi}(\mathtt{S}(T))\leq |D''\cup \{v^{1,k},v_1,v_2,v^{2,3}\}|=\gamma_{t}^{oi}(\mathtt{S}(T'))+4$. Let $B''$ be a $\gamma_{t}^{oi}(\mathtt{S}(T))$-set which satisfies the condition given in Lemma~\ref{lem-tree}, and without loss of generality, assume that $|B''\cap \{v_k,v^{1,k},v_1,v^{1,2},v_2,v^{2,3},v_3\}|$ is minimum. Since $v_1,v_2\in \mathcal{S}(T)\cup \mathcal{SS}(T)$, it follows that $v_1,v_2\in B''$, which leads to $v^{2,3},v^{1,k}\in B''$ and $v^{1,2},v_3\notin B''$. 
By the minimality of $|B''\cap \{v_k,v^{1,k},v_1,v^{1,2},v_2,v^{2,3},v_3\}|$ and the fact that $v_k\in \mathcal{S}(T')\cup\mathcal{L}_w(T')$, there exists a vertex $v_l\in N_T(v_k)\setminus \{v_1\}$ such that $v^{k,l}\in B''$. This implies that $B''\setminus \{v^{1,k},v_1,v_2,v^{2,3}\}$ is a TOIDS of $\mathtt{S}(T')$, and as a consequence, $\gamma_{t}^{oi}(\mathtt{S}(T'))\leq\gamma_{t}^{oi}(\mathtt{S}(T))-4$. Therefore, $\gamma_{t}^{oi}(\mathtt{S}(T))=\gamma_{t}^{oi}(\mathtt{S}(T'))+4$, which completes the proof of (iii).

\vspace{.1cm}

\noindent
Finally, assume that $T$ is obtained from $T'$ by identifying the central vertex $v_1$ of the tree $Q_r$ with a vertex $v_k\in \mathcal{L}_w(T')$. Let $v_l$ be the support vertex adjacent to $v_k$ in $T'$, that is, $N_{T'}(v_k)=\{v_l\}$. To facilitate the proof, we first define the structure of $Q_r$. Let $V(Q_r)=\{v_1,\ldots, v_{3r+1}\}$, where $\mathcal{N}_{SS}(Q_r)=\{v_1\}$ and
\begin{itemize}
\item $\mathcal{SS}(Q_r)=\bigcup_{i\in \{1,\ldots,r\}}\{v_{3i-1}\}, \hspace{.2cm} \mathcal{S}(Q_r)=\bigcup_{i\in \{1,\ldots,r\}}\{v_{3i}\} \hspace{.2cm} \text{and} \hspace{.2cm} \mathcal{L}(Q_r)=\bigcup_{i\in \{1,\ldots,r\}}\{v_{3i+1}\}$.
\item $E(Q_r)=\bigcup_{i\in \{1,\ldots, r\}}\{v_1v_{3i-1}, v_{3i-1}v_{3i}, v_{3i}v_{3i+1}\}$.  
\end{itemize}
Let $A=\mathcal{S}(Q_r)\cup \mathcal{SS}(Q_r)\cup \left(\cup_{i\in \{1,\ldots,r\}}\{v^{1,3i-1},v^{3i,3i+1}\}\right)$. Observe that any $\gamma_{t}^{oi}(\mathtt{S}(T'))$-set~$D'''$ can be extended to a TOIDS of $\mathtt{S}(T)$ by adding the set $A$.
Hence, $\gamma_{t}^{oi}(\mathtt{S}(T))\leq |D'''\cup A|=\gamma_{t}^{oi}(\mathtt{S}(T'))+4r$. Now, let $B'''$ be a $\gamma_{t}^{oi}(\mathtt{S}(T))$-set which satisfies the condition given in Lemma~\ref{lem-tree}, and without loss of generality, assume that $|B'''\cap V(Q_r)|$ is minimum. It is straightforward to verify that $A\subseteq B'''$. By the minimality of $|B'''\cap V(Q_r)|$ and the fact that $v_k\in \mathcal{L}_w(T')$, it follows that $v^{k,l}\in B'''$. This implies that $B'''\setminus A$ is a TOIDS of $\mathtt{S}(T')$, and as a consequence, $\gamma_{t}^{oi}(\mathtt{S}(T'))\leq|B'''\setminus A|=\gamma_{t}^{oi}(\mathtt{S}(T))-4r$. Therefore, $\gamma_{t}^{oi}(\mathtt{S}(T))=\gamma_{t}^{oi}(\mathtt{S}(T'))+4r$, which completes the proof of (iv). 
\end{proof}

\begin{lemma}\label{lem-equality-upper-bound}
Let $T$ be a tree different from $P_5$ such that $\gamma_{t}^{oi}(\mathtt{S}(T))=\frac{4n(T)-l(T)+s(T)-2}{3}$. If $\mathcal{L}_s(T)=\emptyset$, then $|N_T(v_k)\cap \mathcal{S}(T)|=1$ for every vertex $v_k\in \mathcal{SS}(T)$.
\end{lemma}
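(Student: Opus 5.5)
We argue by contradiction. Suppose $T\neq P_5$ attains the upper bound, $\mathcal{L}_s(T)=\emptyset$, and some $v_k\in\mathcal{SS}(T)$ has two support neighbours $v_a,v_b\in\mathcal{S}(T)$. Since $\mathcal{L}_s(T)=\emptyset$, every support vertex of $T$ is adjacent to exactly one leaf. Let $v_{a'}$ and $v_{b'}$ be the leaves adjacent to $v_a$ and $v_b$. The strategy is to remove a small configuration around $v_b$, obtaining a smaller tree $T'$. We then compare $\gamma_t^{oi}(\mathtt{S}(T))$ with $\gamma_t^{oi}(\mathtt{S}(T'))$ using Lemma~\ref{lem-tree-attaching}. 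Finally, we apply the upper bound of Theorem~\ref{theo-tree} to $T'$ and show the bound for $T$ is strict.

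\textbf{Case 1: $\deg_T(v_b)=2$.} Then $v_kv_bv_{b'}$ is a pendant path $P_2$ attached to $v_k$. Let $T'=T-\{v_b,v_{b'}\}$. In $T'$, $v_k$ still neighbours $v_a$, so $v_k\in\mathcal{SS}(T')$. Moreover $T'$ is nontrivial, since it contains $v_k,v_a,v_{a'}$. Lemma~\ref{lem-tree-attaching}(ii) gives $\gamma_t^{oi}(\mathtt{S}(T))=\gamma_t^{oi}(\mathtt{S}(T'))+2$. We have $n(T')=n(T)-2$, $l(T')=l(T)-1$ and $s(T')=s(T)-1$. This holds because removing $v_b,v_{b'}$ does not turn $v_k$ into a leaf or a support vertex: $v_k$ keeps $v_a$ as a neighbour and $v_a$ is not a leaf. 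Theorem~\ref{theo-tree} then gives
\[
\gamma_t^{oi}(\mathtt{S}(T))\le \frac{4(n-2)-(l-1)+(s-1)-2}{3}+2=\frac{4n-l+s-2}{3}-\frac{8}{3}+2,
\]
which is strictly less than the bound, a contradiction. If $T'$ happens to be small (order at most $3$), we instead check directly, using Lemma~\ref{lem-tree}, that the value for $T$ falls below the bound.

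\textbf{Case 2: $\deg_T(v_b)\ge3$ and $\deg_T(v_a)\ge 3$.} Then $v_b$ has its unique leaf $v_{b'}$ plus other non-leaf branches. I would instead form $T'=T-\{v_{b'}\}$ (and similarly consider $v_{a'}$). Unfortunately $T'$ is obtained from $T$ by deleting a leaf of a non-support-in-$T'$ vertex, so Lemma~\ref{lem-tree-attaching}(i) does not directly apply. Instead, I would build a TOIDS of $\mathtt{S}(T)$ directly and count it. Take a $\gamma_t^{oi}(\mathtt{S}(T'))$-set $D$ containing $\mathcal{S}(T')\cup\mathcal{SS}(T')$ by Lemma~\ref{lem-tree}. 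Then $v_b\in\mathcal{SS}(T')$ because $v_b$ has a support neighbour? This is not guaranteed, so I would prefer a cleaner reduction. Root $T$ at $v_k$ and pick $v_b$ so that $T_{v_b}$ is of minimum depth. If some such branch has $\deg(v_b)=2$, Case 1 applies. Otherwise every support neighbour of $v_k$ has degree at least $3$. Then $v_b\in\mathcal{S}(T')\cup\mathcal{SS}(T')$ fails after removal only in controlled ways, and I would compare a set $D\cup\{v^{b,b'}\}$ (adding $v_b$ if needed) against the bound. The parameter changes are $n-1$, $l-1$ and $s-1$, which yields a saving of $1/3$ when $v_b$ stays in $D$.

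\textbf{Main obstacle.} The hard step is Case 2, where neither support neighbour is a pendant $P_2$. I expect the exceptional tree $P_5$ to arise precisely as the degenerate small case of Case 1. In Case 2 I would first try to exploit that $v_k\in D$ by Lemma~\ref{lem-tree}, so $v^{k,b}$ can be omitted. This should give an explicit TOIDS of size at most the bound minus a positive amount, because the vertex $v_k$ dominates both $v^{k,a}$ and $v^{k,b}$ at once.
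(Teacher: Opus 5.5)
\textbf{There is a genuine gap, and it cannot be closed, because the statement as written is false.}

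\textbf{Comparison with the paper, and where your argument fails.} Your Case~1 is essentially the paper's entire proof. The paper deletes a support neighbour $v_1$ of $v_k$ together with its leaf $v_3$. It takes a $\gamma_t^{oi}(\mathtt{S}(T'))$-set containing $v_k$ (Lemma~\ref{lem-tree}), adds $v_1$ and $v^{1,3}$, and contradicts the upper bound of Theorem~\ref{theo-tree} for $T'$. It treats only $T'\cong P_3$ (i.e.\ $T\cong P_5$) as exceptional.

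One step of your Case~1 is false. Keeping $v_a$ as a neighbour only gives $\deg_{T'}(v_k)\ge 1$, so if $\deg_T(v_k)=2$ then $v_k$ \emph{is} a leaf of $T'$. This happens with $T'$ arbitrarily large as soon as $\deg_T(v_a)\ge 3$, not only for $P_5$. Then $v_k\notin\mathcal{S}(T')\cup\mathcal{SS}(T')$, so Lemma~\ref{lem-tree-attaching}(ii) does not apply. Moreover $v_a$ becomes a strong support vertex of $T'$ and $l(T')=l(T)$, so your count breaks down. Your Case~2 is only a plan, and the saving you hope for there does not exist.

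\textbf{Counterexamples.} Let $T$ have edges $u_1u_2,u_2u_3,u_3u_4,u_4u_5,u_2u_6,u_6u_7,u_7u_8$.
\begin{itemize}
\item $\mathcal{L}(T)=\{u_1,u_5,u_8\}$ and $\mathcal{S}(T)=\{u_2,u_4,u_7\}$. Each support vertex has exactly one leaf, so $\mathcal{L}_s(T)=\emptyset$, and $T\not\cong P_5$.
\item $u_3\in\mathcal{SS}(T)$ has the two support neighbours $u_2,u_4$.
\item $\mathtt{S}(T)$ is a spider centred at $u_2$. Any TOIDS contains $u^{1,2}$ and one of $u_1,u_2$. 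It also contains $u^{4,5}$, one of $u_4,u_5$, and at least two of $u^{2,3},u_3,u^{3,4}$ (to dominate $u_3$ while keeping the complement independent). The same holds on the leg through $u_6,u_7,u_8$.
\item Hence $\gamma_t^{oi}(\mathtt{S}(T))\ge 10=\frac{4\cdot 8-3+3-2}{3}$, so $T$ attains the upper bound. Consistently, $T\in\mathcal{T}$ via two $O_2$ steps.
\end{itemize}
With $v_k=u_3$, $v_b=u_4$, $v_a=u_2$, this is exactly where your Case~1 fails. Indeed $\gamma_t^{oi}(\mathtt{S}(T-\{u_4,u_5\}))=7$, a jump of $3$ rather than $2$. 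Attaching one more $P_3$ at $u_4$ gives a Case~2 counterexample, with $\deg_T(u_2)=\deg_T(u_4)=3$ and $\gamma_t^{oi}=14=\frac{44-4+4-2}{3}$.

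\textbf{What the paper assumes, and the repair.} The paper's proof silently uses two extra hypotheses: $\deg_T(v_1)=2$ (otherwise $T-\{v_1,v_3\}$ is not a tree) and $\deg_T(v_k)\ge 3$. Under these hypotheses your Case~1 is a complete and correct proof. They do hold where the lemma is actually used: the children of $v_4$ in Claim~II, Case~2 of Lemma~\ref{lem-upper-bound-left}. Their support neighbours lie on pendant $P_2$'s, and they are also adjacent to $v_4$. So the right repair is to add those hypotheses to the statement, not to finish Case~2.
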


\begin{proof}
Let $v_k\in \mathcal{SS}(T)$. By definition, we have that $N_T(v_k)\cap \mathcal{S}(T)\neq \emptyset$. Suppose that there exist $v_1,v_2\in N_T(v_k)\cap \mathcal{S}(T)$. Since $\mathcal{L}_s(T)=\emptyset$, we define $N_T(v_i)\cap \mathcal{L}(T)=\{v_{i+2}\}$ for $i\in \{1,2\}$. Let $T'=T-\{v_1,v_3\}$. Observe that $T'\not\cong P_3$, since $T\not\cong P_5$. Hence, $n(T)=n(T')+2$, $l(T)=l(T')+1$ and $s(T)=s(T')+1$. Now, let $D$ be a $\gamma_{t}^{oi}(\mathtt{S}(T'))$-set which satisfies the condition given in Lemma~\ref{lem-tree}. Since $v_k\in \mathcal{SS}(T')$, it follows that $v_k\in D$. As a consequence, $D\cup \{v_1,v^{1,3}\}$ is a TOIDS of $\mathtt{S}(T)$, which implies that $\gamma_{t}^{oi}(\mathtt{S}(T))\leq \gamma_{t}^{oi}(\mathtt{S}(T'))+2$. Therefore,
\begin{equation*}
\begin{aligned}
\gamma_{t}^{oi}(\mathtt{S}(T'))\geq \gamma_{t}^{oi}(\mathtt{S}(T))-2
&=\tfrac{4n(T)-l(T)+s(T)-2}{3}-2\\[5pt]
&=\tfrac{4(n(T')+2)-(l(T')+1)+(s(T')+1)-2}{3}-2>\tfrac{4n(T')-l(T')+s(T')-2}{3},
\end{aligned}
\end{equation*} 
which contradicts the upper bound given in Theorem~\ref{theo-tree}. Hence, $|N_T(v_k)\cap \mathcal{S}(T)|=1$, which completes the proof.
\end{proof}

\section{Trees $T$ with $\gamma_{t}^{oi}(\mathtt{S}(T))=\frac{4n(T)-l(T)-s(T)}{3}$}

To characterize the trees attaining the lower bound given in Theorem~\ref{theo-tree}, we introduce the following family of trees. Let $\mathcal{F}$ be the family of trees $T$ that can be obtained from a sequence of trees $T_0,\ldots, T_k=T$, with $k\geq 0$ and $T_0=P_2$. If $k\geq 1$, then for each $i\in \{1,\ldots, k\}$, the tree $T_i$ can be obtained from the tree $T'=T_{i-1}$ by one of the following three operations defined below.
\begin{description}
  \item[Operation $F_1$:]  Attach a path $P_1$ to a vertex  in $\mathcal{S}(T')$.
  \item[Operation $F_2$:]  Attach a path $P_2$ to a vertex  in $\mathcal{S}(T')\cup \mathcal{SS}(T')$.
  \item[Operation $F_3$:]  Attach a path $P_3$ to a vertex  in $\mathcal{L}_w(T')$.
\end{description}

\vspace{.2cm}

\noindent
We first show that every tree $T$ in the family $\mathcal{F}$ satisfies that $\gamma_{t}^{oi}(\mathtt{S}(T))=\frac{4n(T)-l(T)-s(T)}{3}$.

\begin{lemma}\label{lem-lower-bound-right}
Let $T$ be a nontrivial tree. If $T \in \mathcal{F}$, then $\gamma_{t}^{oi}(\mathtt{S}(T))=\frac{4n(T)-l(T)-s(T)}{3}$.
\end{lemma}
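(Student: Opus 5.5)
Induction on the number $k$ of operations used to build $T$ from $T_0=P_2$. For the base case $T=P_2$, we have $\mathtt{S}(P_2)\cong P_3$ and $\gamma_t^{oi}(P_3)=2$. With $n=2$, $l=1$, $s=1$ (by the convention for $P_2$), the formula gives $(8-1-1)/3=2$, as required.

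For the inductive step, let $T$ be obtained from $T'=T_{k-1}\in\mathcal{F}$ by one operation. By hypothesis
\[
\gamma_t^{oi}(\mathtt{S}(T'))=\tfrac{4n(T')-l(T')-s(T')}{3}.
\]
Lemma~\ref{lem-tree-attaching} gives the exact increment of $\gamma_t^{oi}$ for each operation. It then remains to track how $n$, $l$ and $s$ change, and to check that $4\Delta n-\Delta l-\Delta s$ equals $3$ times the increment.

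\begin{itemize}
\item Operation $F_1$ (Lemma~\ref{lem-tree-attaching}(i), increment $1$). Attaching a leaf to a support vertex gives $\Delta n=1$, $\Delta l=1$, $\Delta s=0$, so $4-1-0=3$. The one subtlety is $T'=P_2$: a leaf attached to the convention-designated support vertex gives $P_3$, where $l=2$, $s=1$, consistent with $\Delta s=0$.
\item Operation $F_2$ (Lemma~\ref{lem-tree-attaching}(ii), increment $2$). Attaching $P_2$ to $v\in\mathcal{S}(T')\cup\mathcal{SS}(T')$ adds a new leaf and a new support vertex, so $\Delta n=2$, $\Delta l=1$, $\Delta s=1$. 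Here $v$ was not a leaf: a vertex of $\mathcal{SS}$ is by definition not a leaf, and a support vertex is not a leaf except in the $P_2$ case, which is handled separately. So no leaf or support vertex is lost, and $8-1-1=6$.
\item Operation $F_3$ (Lemma~\ref{lem-tree-attaching}(iii), increment $4$, valid since $\mathcal{L}_w(T')\subseteq \mathcal{S}(T')\cup\mathcal{L}_w(T')$). Attaching $P_3$ to a weak leaf $v$ makes $v$ a non-leaf, while the new path contributes one leaf and one support vertex. So $\Delta n=3$, $\Delta l=0$, and $\Delta s$ depends on the support vertex $u$ of $v$ in $T'$.
\end{itemize}

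The main obstacle is showing $\Delta s=0$ in case $F_3$, i.e.\ that $u$ stops being a support vertex so the net change in $s$ is $-1+1$. This holds exactly when $v$ was the only leaf adjacent to $u$. Here the definition of weak leaf matters: $v\notin\mathcal{L}_s(T')$ means $u\notin\mathcal{S}_s(T')$, so $u$ has exactly one leaf neighbour, namely $v$. Hence $u$ loses support status and $\Delta s=0$, giving $12-0-0=12=3\cdot4$.

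One degenerate case needs separate checking: $T'=P_2$, where both vertices are leaves and the convention on support vertices applies. There I would verify directly which vertex counts as a weak leaf and compute $\gamma_t^{oi}(\mathtt{S}(P_5))$ or the relevant small tree by hand. In every case the formula is preserved, which completes the induction.
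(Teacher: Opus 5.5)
Your plan is the same as the paper's proof: induction along the construction sequence, Lemma~\ref{lem-tree-attaching} for the increment, then checking that $4\Delta n-\Delta l-\Delta s$ equals three times that increment. Your justification of $\Delta s=0$ under $F_3$ is more careful than the paper's, which simply asserts $s(T')=s(T)$ and $l(T')=l(T)$. However, the degenerate case you postponed does not work out, so your closing claim \emph{in every case the formula is preserved} is false.

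Write $P_2=ab$ with $a$ the designated support vertex and $b$ the leaf. Since $a$ has only one leaf neighbour, $\mathcal{S}_s(P_2)=\emptyset$. Hence $b\in\mathcal{L}_w(P_2)$, and $F_3$ can be applied at $b$, giving $P_5\in\mathcal{F}$. In $P_5$ the vertex $a$ is a genuine leaf and $b$ a genuine support vertex. So $\Delta n=3$ but $\Delta l=\Delta s=1$, and $4\Delta n-\Delta l-\Delta s=10\neq 12$. Concretely, $\gamma_t^{oi}(\mathtt{S}(P_5))=\gamma_t^{oi}(P_9)=6$. This follows from Lemma~\ref{lem-tree-attaching}(iii), or directly: labelling $P_9$ as $1,\ldots,9$ in order, $\{2,3,5,6,8,9\}$ works. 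But $\frac{4n-l-s}{3}=\frac{16}{3}$ is not even an integer. So the statement as written is false for $P_5$, and the discrepancy of $\frac{2}{3}$ carries over to every tree whose construction begins with this step. The paper's Case~3 contains exactly the same oversight; in fact $P_5$ attains the upper bound instead.

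So no argument can close this gap; the family itself has to be repaired, for example by requiring $n(T')\ge 3$ in Operation $F_3$. With that restriction your proof is complete. For $n(T')\ge 3$, the support vertex $u$ of the weak leaf $v$ has degree at least $2$, so $v$ does not become a support vertex after the attachment. Also $u$ loses its only leaf neighbour, so $\Delta l=\Delta s=0$. The other degenerate cases you mention are harmless: $F_1$ and $F_2$ applied to $P_2$ yield $P_3$ and $P_4$, with the expected changes in $l$ and $s$. The restriction also does not hurt the converse direction, because the only tree obtained from $P_2$ by $F_3$ is $P_5$, which does not attain the lower bound.
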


\begin{proof}
Let $T$ be a tree belonging to the family $\mathcal{F}$. We proceed to prove that $\gamma_{t}^{oi}(\mathtt{S}(T))=(4n(T)-l(T)-s(T))/3$ by induction on the order of $T$. If $n(T)=2$, then $T=P_2$ and $\gamma_{t}^{oi}(\mathtt{S}(T))=2=(4n(T)-l(T)-s(T))/3$, as required. This particular case establishes the base case. Suppose that $n(T)\geq 3$ and that every tree $T^*$ in $\mathcal{F}$, with $2\leq n(T^*)< n(T)$, satisfies that $\gamma_{t}^{oi}(\mathtt{S}(T^*))=(4n(T^*)-l(T^*)-s(T^*))/3$. Since $T\in \mathcal{F}$, it is clear that $T$ can be obtained from a sequence of trees $T_0,\ldots, T_k=T$, with $T_0=P_2$ and $k\geq 1$. Let $T'=T_{k-1}$, which implies that $T'\in \mathcal{F}$, and by induction hypothesis, it follows that $\gamma_{t}^{oi}(\mathtt{S}(T'))=(4n(T')-l(T')-s(T'))/3$. We consider the following three cases, depending on which operation is used to obtain the tree $T$ from $T'$.

\vspace{.2cm}

\noindent
\textbf{Case 1:} $T$ is obtained from $T'$ by Operation $F_1$. In this case, $T$ is formed by adding a vertex $v_1$ and the edge $v_1v_i$ where $v_i\in \mathcal{S}(T')$. By Lemma~\ref{lem-tree-attaching}-(i), the induction hypothesis, and the relations $n(T')=n(T)-1$, $s(T')=s(T)$, and $l(T')=l(T)-1$, we obtain the following desired equality:
{\small
\begin{equation*}
 \gamma_{t}^{oi}(\mathtt{S}(T))=\gamma_{t}^{oi}(\mathtt{S}(T'))+1=\tfrac{4n(T')-l(T')-s(T')}{3}+1=\tfrac{4(n(T)-1)-(l(T)-1)-s(T)}{3}+1= \tfrac{4n(T)-l(T)-s(T)}{3}.
\end{equation*}
}

\vspace{.3cm}

\noindent
\textbf{Case 2:} $T$ is obtained from $T'$ by Operation $F_2$. In this case, $T$ is formed by adding the path $v_1v_2$ and the edge $v_1v_i$ where $v_i\in \mathcal{S}(T')\cup \mathcal{SS}(T')$. By Lemma~\ref{lem-tree-attaching}-(ii), the induction hypothesis, and the relations $n(T')=n(T)-2$, $s(T')=s(T)-1$, and $l(T')=l(T)-1$, we obtain the following desired equality:
{\small
\begin{equation*}
\gamma_{t}^{oi}(\mathtt{S}(T))=\gamma_{t}^{oi}(\mathtt{S}(T'))+2=\tfrac{4n(T')-l(T')-s(T')}{3}+2=\tfrac{4(n(T)-2)-(l(T)-1)-(s(T)-1)}{3}+2= \tfrac{4n(T)-l(T)-s(T)}{3}.
\end{equation*}
}

\vspace{.3cm}

\noindent
\textbf{Case 3:} $T$ is obtained from $T'$ by Operation $F_3$. In this case, $T$ is formed by adding the path $v_1v_2v_3$ and the edge $v_1v_i$ where $v_i\in \mathcal{L}_w(T')$. By Lemma~\ref{lem-tree-attaching}-(iii), the induction hypothesis, and the relations $n(T')=n(T)-3$, $s(T')=s(T)$, and $l(T')=l(T)$, we obtain the following desired equality:
\begin{equation*}
\gamma_{t}^{oi}(\mathtt{S}(T))=\gamma_{t}^{oi}(\mathtt{S}(T'))+4=\tfrac{4n(T')-l(T')-s(T')}{3}+4=\tfrac{4(n(T)-3)-l(T)-s(T)}{3}+4= \tfrac{4n(T)-l(T)-s(T)}{3}.
\end{equation*}

\vspace{.3cm}

\noindent
Therefore, and as a consequence of the three cases above, the proof is complete. 
\end{proof}
 
\vspace{.2cm}

\noindent
We next show that any tree $T$ with $\gamma_{t}^{oi}(\mathtt{S}(T))=(4n(T)-l(T)-s(T))/3$ belongs to the family~$\mathcal{F}$.

\begin{lemma}\label{lem-lower-bound-left}
Let $T$ be a nontrivial tree. If $\gamma_{t}^{oi}(\mathtt{S}(T))=\frac{4n(T)-l(T)-s(T)}{3}$, then $T \in \mathcal{F}$.
\end{lemma}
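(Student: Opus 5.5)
We proceed by induction on $n(T)$. If $n(T)=2$ then $T=P_2\in\mathcal{F}$. For $n(T)=3$ we have $T=P_3$, and a direct check gives $\gamma_t^{oi}(\mathtt{S}(P_3))=3$, which equals $(12-2-1)/3=3$; indeed $P_3$ arises from $P_2$ by Operation $F_1$. So assume $n(T)\ge 4$, $T$ attains the lower bound, and the statement holds for smaller trees. The strategy is to locate a pendant structure whose removal yields a tree $T'$ such that $T$ arises from $T'$ by one of $F_1,F_2,F_3$. Combining Lemma~\ref{lem-tree-attaching} with the parameter changes, exactly as in Lemma~\ref{lem-lower-bound-right}, then shows that $T'$ also attains the lower bound, and the induction hypothesis gives $T'\in\mathcal{F}$, hence $T\in\mathcal{F}$. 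The key point is that the three identities in Lemma~\ref{lem-lower-bound-right} are reversible. Whenever $T$ comes from $T'$ by one of the operations, the equality $\gamma_t^{oi}(\mathtt{S}(T))=\gamma_t^{oi}(\mathtt{S}(T'))+c$ together with the changes in $n,l,s$ shows that $T$ attains the lower bound if and only if $T'$ does.

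We first dispose of easy reductions. If $T$ has a strong support vertex $v_k$ with at least two leaves, remove one leaf to get $T'$. Then $v_k\in\mathcal{S}(T')$, so $T$ is obtained from $T'$ by $F_1$ and we are done. Hence we may assume $\mathcal{L}_s(T)=\emptyset$, so every support vertex has exactly one leaf. Next, suppose some support vertex $v_1$ of degree $2$, with leaf $v_2$, has its other neighbour $v_k$ in $\mathcal{S}(T')\cup\mathcal{SS}(T')$, where $T'=T-\{v_1,v_2\}$ (we need $n(T')\ge 2$, which holds since $n(T)\ge4$). Then $T$ arises from $T'$ by $F_2$, and we are done.

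Now root $T$ at an end $v_r$ of a diametral path, and let $v_d$ be a leaf at maximum distance, with parent $u$, grandparent $w$ and great-grandparent $x$. By the previous reductions $u$ has degree $2$. Since $u$ is a support vertex, $w\in\mathcal{S}(T)\cup\mathcal{SS}(T)$. We then ask whether $w\in\mathcal{S}(T')\cup\mathcal{SS}(T')$ for $T'=T-\{u,v_d\}$.
\begin{itemize}
\item If $w$ has another child, that child is a support vertex or a leaf by the maximality of the depth, so $w$ stays in $\mathcal{S}(T')\cup\mathcal{SS}(T')$ and $F_2$ applies.
\item If $w$ has no other child but $x$ is a support vertex, then again $F_2$ applies.
\item The remaining case is $\deg(w)=2$ and $x\notin\mathcal{S}(T)$. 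Then $T$ arises from $T'=T-\{w,u,v_d\}$ by attaching $P_3$ at $x$.
\end{itemize}

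In the last case $x$ is typically not a leaf of $T'$, whereas Operation $F_3$ requires attaching at a vertex of $\mathcal{L}_w(T')$. Here we must exploit tightness. By Theorem~\ref{theo-tree} applied to $T'$, we have $\gamma_t^{oi}(\mathtt{S}(T'))\ge (4n(T')-l(T')-s(T'))/3$. At the same time, extending a $\gamma_t^{oi}(\mathtt{S}(T'))$-set by suitable vertices of the attached part costs only about $3$ instead of $4$ when $x$ already lies in a good position. This is the main obstacle.

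My plan is to take $B$ as in Lemma~\ref{lem-tree} and minimize its intersection with the pendant part. Then $w,u,v^{u,v_d}\in B$. If $x\in B$, or if some $v^{x,y}$ with $y\neq w$ lies in $B$, then three vertices suffice, and $B\setminus\{w,u,v^{u,v_d}\}$, possibly after a swap, is a TOIDS of $\mathtt{S}(T')$. This would give $\gamma_t^{oi}(\mathtt{S}(T))\le\gamma_t^{oi}(\mathtt{S}(T'))+3$, contradicting the lower bound for $T'$ by the arithmetic $4\cdot3/3=4>3$. Making this precise forces $x$ to have degree $2$ in $T$ with parent $y$ a leaf of $T'=T-\{w,u,v_d\}$. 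One then verifies that $y$ is a weak leaf of $T'$, since otherwise $F_1$-type savings again yield a contradiction, and concludes via $F_3$. The delicate part is the case analysis on the other children of $x$, which have depth at most $3$ and are pendant $P_1$, $P_2$ or $P_3$ paths by the previous reductions. For these I expect either an already-available $F_2$ reduction or a saving argument of the same kind.
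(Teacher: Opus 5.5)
There is a genuine gap, located in your treatment of a pendant path $x\,w\,u\,v_d$ with $\deg_T(w)=\deg_T(u)=2$. Your $F_1$ reduction, your general $F_2$ reduction, and your first bullet are fine.

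First, your second bullet is false. If $w$ has no other child, then $w$ is a leaf of $T'=T-\{u,v_d\}$, and $T'\not\cong P_2$ when $x\in\mathcal S(T)$. Hence $w\notin\mathcal{S}(T')\cup\mathcal{SS}(T')$, and neither $F_2$ nor Lemma~\ref{lem-tree-attaching}-(ii) is available, whether or not $x$ is a support vertex. This configuration cannot be reduced; it has to be excluded by tightness.

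Second, your tightness step does not yield a contradiction. The inequality $\gamma_t^{oi}(\mathtt{S}(T))\le\gamma_t^{oi}(\mathtt{S}(T'))+3$ points the wrong way: it bounds $\gamma_t^{oi}(\mathtt{S}(T'))$ from below, so it can never clash with a lower bound. Even the correct direction $\gamma_t^{oi}(\mathtt{S}(T'))\le\gamma_t^{oi}(\mathtt{S}(T))-3$ is too weak for $T'=T-\{w,u,v_d\}$. When $x$ is not a leaf of $T'$, passing from $T$ to $T'$ lowers $n,l,s$ by $3,1,1$, so the bound drops by $10/3$, not by $4$. You then only get $\gamma_t^{oi}(\mathtt{S}(T'))\le\frac{4n(T')-l(T')-s(T')}{3}+\frac13$, which is consistent with Theorem~\ref{theo-tree}. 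Your ``$4\cdot 3/3$'' silently assumes that $l$ and $s$ are unchanged. So the subcase $x\in B$ with no $v^{x,y}$ ($y\neq w$) in $B$ is left open. Your final paragraph (the weak-leaf check and the analysis of the other children of $x$) is only announced, not carried out.

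The missing idea, which is how the paper's Claim~III proceeds, is to choose the comparison tree according to whether $x\in B$. Take $B\supseteq\mathcal S(T)\cup\mathcal{SS}(T)$, normalized by a swap so that $v^{x,w},w,u,v^{u,v_d}\in B$ and $v^{w,u},v_d\notin B$, and suppose $x\notin\mathcal L_w(T')$ for $T'=T-\{w,u,v_d\}$.
\begin{itemize}
\item If $x\notin B$, then $x$ is not a leaf of $T'$, because a strong leaf of $T'$ lies in $\mathcal{SS}(T)\subseteq B$. Every neighbour of $x$ in $\mathtt S(T)$ lies in $B$, so deleting all \emph{four} vertices $v^{x,w},w,u,v^{u,v_d}$ leaves a TOIDS of $\mathtt S(T')$. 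A saving of $4>10/3$ contradicts Theorem~\ref{theo-tree}.
\item If $x\in B$, compare instead with $T''=T-\{u,v_d\}$, where $w$ hangs as a leaf at $x$. Then $B\setminus\{w,u,v^{u,v_d}\}$ is a TOIDS of $\mathtt S(T'')$: $v^{x,w}$ dominates $w$ and is dominated by $x$. Here $n$ drops by $2$, $l$ is unchanged and $s$ does not increase, so the bound drops by at most $8/3<3$, again a contradiction.
\end{itemize}
Hence $x\in\mathcal L_w(T')$, and $F_3$ together with Lemma~\ref{lem-tree-attaching}-(iii) finishes the induction. This argument needs no distinction on whether $x\in\mathcal S(T)$, so it also disposes of your second bullet (there $x\in\mathcal S(T)\subseteq B$). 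It likewise requires no case analysis on the other children of $x$.
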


\begin{proof}
Let $T$ be a tree with $\gamma_{t}^{oi}(\mathtt{S}(T))=\frac{4n(T)-l(T)-s(T)}{3}$. We proceed to prove that $T \in \mathcal{F}$ by induction on the order of $T$. If $n(T)=2$, then $T=P_2$, which belongs to $\mathcal{F}$. Suppose that $n(T)\geq 3$ and that every tree $T^*$ with $\gamma_{t}^{oi}(\mathtt{S}(T^*))=(4n(T^*)-l(T^*)-s(T^*))/3$ and $2\leq n(T^*)< n(T)$ satisfies that $T^*\in \mathcal{F}$. Let $v_1v_2\cdots v_dv_{d+1}$ be a diametral path in $T$ ($d$ represents the diameter of $T$). We now proceed with the following claims.

\vspace{.25cm}

\noindent
{\bf Claim I:} If $|N_T(v_{2})|\geq 3$, then $T\in \mathcal{F}$.

\vspace{.2cm}

\noindent
{\em Proof of Claim I.} Let $T'=T-\{v_1\}$. Observe that $v_2\in \mathcal{S}(T')$. By Lemma~\ref{lem-tree-attaching}-(i), it follows that $\gamma_{t}^{oi}(\mathtt{S}(T))=\gamma_{t}^{oi}(\mathtt{S}(T'))+1$. Using the hypothesis of the lemma and the relations $n(T)=n(T')+1$, $s(T)=s(T')$, and $l(T)=l(T')+1$, we obtain that
\begin{equation*}
\begin{aligned}
\gamma_{t}^{oi}(\mathtt{S}(T'))=\gamma_{t}^{oi}(\mathtt{S}(T))-1&=\tfrac{4n(T)-l(T)-s(T)}{3}-1\\[5pt]
&=\tfrac{4(n(T')+1)-(l(T')+1)-s(T')}{3}-1=\tfrac{4n(T')-l(T')-s(T')}{3}.
\end{aligned}
\end{equation*} 

\noindent
Hence, by the induction hypothesis, it follows that $T'\in \mathcal{F}$. Therefore, $T$ can be obtained from $T'$ by Operation $F_1$, and consequently, $T\in \mathcal{F}$.

\vspace{.4cm}

\noindent
{\bf Claim II:} If $|N_T(v_{2})|=2$ and $|N_T(v_{3})|\geq 3$, then $T\in \mathcal{F}$.

\vspace{.2cm}

\noindent
{\em Proof of Claim II.} Let $T'=T-\{v_1,v_2\}$. Observe that $v_3\in \mathcal{S}(T')\cup \mathcal{SS}(T')$. By Lemma~\ref{lem-tree-attaching}-(ii), it follows that $\gamma_{t}^{oi}(\mathtt{S}(T))=\gamma_{t}^{oi}(\mathtt{S}(T'))+2$. Using the hypothesis of the lemma and the relations $n(T)=n(T')+2$, $s(T)=s(T')+1$, and $l(T)=l(T')+1$, we obtain that
\begin{equation*}
\begin{aligned}
\gamma_{t}^{oi}(\mathtt{S}(T'))=\gamma_{t}^{oi}(\mathtt{S}(T))-2&=\tfrac{4n(T)-l(T)-s(T)}{3}-2\\[5pt]
&=\tfrac{4(n(T')+2)-(l(T')+1)-(s(T')+1)}{3}-2=\tfrac{4n(T')-l(T')-s(T')}{3}.
\end{aligned}
\end{equation*} 

\noindent
Hence, by the induction hypothesis, it follows that $T'\in \mathcal{F}$. Therefore, $T$ can be obtained from $T'$ by Operation $F_2$, and consequently, $T\in \mathcal{F}$.

\vspace{.4cm}

\noindent
{\bf Claim III:} If $|N_T(v_{2})|=|N_T(v_{3})|=2$, then $T\in \mathcal{F}$.

\vspace{.2cm}

\noindent
{\em Proof of Claim III.} Let $T'=T-\{v_1,v_2,v_3\}$. Since $T\not\cong P_4$, it follows that $n(T')\geq 2$. Let $D$ be a $\gamma_{t}^{oi}(\mathtt{S}(T))$-set which satisfies the condition given in Lemma~\ref{lem-tree}, and without loss of generality, assume that $|D\cap \{v_4,v^{3,4},v_3,v^{2,3},v_2,v^{1,2},v_1\}|$ is minimum. Since $v_2,v_3\in \mathcal{S}(T)\cup \mathcal{SS}(T)$, it follows that $v_2,v_3\in D$. Moreover, by the minimality of $|D\cap \{v_4,v^{3,4},v_3,v^{2,3},v_2,v^{1,2},v_1\}|$, we have that $v^{1,2},v^{3,4}\in D$ and $v_1,v^{2,3}\notin D$. Now, suppose that $v_4\in V(T')\setminus \mathcal{L}_w(T')$, and let us analyze the following two cases.

\vspace{.2cm}

\noindent
\textbf{Case 1:} $v_4\in\mathcal{L}_s(T')$. In this case, it is straightforward to deduce that $v_4\notin D$. As a consequence, $D\setminus \{v^{3,4},v_3,v_2,v^{1,2}\}$ is a TOIDS of $\mathtt{S}(T')$, which implies that $\gamma_{t}^{oi}(\mathtt{S}(T'))\leq\gamma_{t}^{oi}(\mathtt{S}(T))-4$. By the hypothesis of the lemma and the relations $n(T)=n(T')+3$, $s(T)=s(T')+1$, and $l(T)=l(T')$, we obtain that
\begin{equation*}
\begin{aligned}
\gamma_{t}^{oi}(\mathtt{S}(T'))\leq\gamma_{t}^{oi}(\mathtt{S}(T))-4&=\tfrac{4n(T)-l(T)-s(T)}{3}-4\\[5pt]
&=\tfrac{4(n(T')+3)-l(T')-(s(T')+1)}{3}-4<\tfrac{4n(T')-l(T')-s(T')}{3},
\end{aligned}
\end{equation*}
which contradicts the lower bound given in Theorem~\ref{theo-tree}.

\vspace{.2cm}

\noindent
\textbf{Case 2:} $v_4\in V(T')\setminus \mathcal{L}(T')$. If $v_4\notin D$, then $D\setminus \{v^{3,4},v_3,v_2,v^{1,2}\}$ is a TOIDS of $\mathtt{S}(T')$, which implies that $\gamma_{t}^{oi}(\mathtt{S}(T'))\leq\gamma_{t}^{oi}(\mathtt{S}(T))-4$. By the hypothesis of the lemma and the relations $n(T)=n(T')+3$, $s(T)=s(T')+1$, and $l(T)=l(T')+1$, we obtain that
\begin{equation*}
\begin{aligned}
\gamma_{t}^{oi}(\mathtt{S}(T'))\leq\gamma_{t}^{oi}(\mathtt{S}(T))-4&=\tfrac{4n(T)-l(T)-s(T)}{3}-4\\[5pt]
&=\tfrac{4(n(T')+3)-(l(T')+1)-(s(T')+1)}{3}-4<\tfrac{4n(T')-l(T')-s(T')}{3},
\end{aligned}
\end{equation*}
which contradicts the lower bound given in Theorem~\ref{theo-tree}. Now, assume that $v_4\in D$. Let $T''=T-\{v_1,v_2\}$. Observe that $D\setminus \{v_3,v_2,v^{1,2}\}$ is a TOIDS of $\mathtt{S}(T'')$, which implies that $\gamma_{t}^{oi}(\mathtt{S}(T''))\leq\gamma_{t}^{oi}(\mathtt{S}(T))-3$. By the hypothesis of the lemma and the relations $n(T)=n(T'')+2$, $s(T)\geq s(T'')$, and $l(T)=l(T'')$, we obtain that
\begin{equation*}
\begin{aligned}
\gamma_{t}^{oi}(\mathtt{S}(T''))\leq\gamma_{t}^{oi}(\mathtt{S}(T))-3&=\tfrac{4n(T)-l(T)-s(T)}{3}-3\\[5pt]
&\leq\tfrac{4(n(T'')+2)-l(T'')-s(T'')}{3}-3<\tfrac{4n(T'')-l(T'')-s(T'')}{3},
\end{aligned}
\end{equation*}

\noindent
which again contradicts the lower bound given in Theorem~\ref{theo-tree}.

\vspace{.2cm}

\noindent
From the contradictions obtained in the two previous cases, we conclude that $v_4\in \mathcal{L}_w(T')$. By Lemma~\ref{lem-tree-attaching}-(iii), it follows that $\gamma_{t}^{oi}(\mathtt{S}(T))=\gamma_{t}^{oi}(\mathtt{S}(T'))+4$. Using the hypothesis of the lemma and the relations $n(T)=n(T')+3$, $s(T)=s(T')$, and $l(T)=l(T')$, we obtain that
\begin{equation*}
\gamma_{t}^{oi}(\mathtt{S}(T'))=\gamma_{t}^{oi}(\mathtt{S}(T))-4=\tfrac{4n(T)-l(T)-s(T)}{3}-4=\tfrac{4(n(T')+3)-l(T')-s(T')}{3}-4=\tfrac{4n(T')-l(T')-s(T')}{3}.
\end{equation*} 

\noindent
Hence, by the induction hypothesis, it follows that $T'\in \mathcal{F}$. Therefore, $T$ can be obtained from $T'$ by Operation $F_3$, and consequently, $T\in \mathcal{F}$. 
\end{proof}

\vspace{.2cm}

\noindent
As an immediate consequence of Lemmas~\ref{lem-lower-bound-right} and \ref{lem-lower-bound-left} we have the desired characterization.

\begin{theorem}
Let $T$ be a nontrivial tree. Then $\gamma_{t}^{oi}(\mathtt{S}(T))=\frac{4n(T)-l(T)-s(T)}{3}$ if and only if  $T\in \mathcal{F}$.
\end{theorem}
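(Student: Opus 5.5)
The theorem is an equivalence, and each direction has already been proved as a separate lemma, so the plan is simply to combine them. Fix a nontrivial tree $T$.

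For the ``if'' direction, assume $T\in\mathcal{F}$. Lemma~\ref{lem-lower-bound-right} then gives $\gamma_{t}^{oi}(\mathtt{S}(T))=\frac{4n(T)-l(T)-s(T)}{3}$ directly. That lemma is an induction along the defining sequence $T_0=P_2,\ldots,T_k=T$. At each step, Lemma~\ref{lem-tree-attaching}~(i)--(iii) shows that the quantity $\gamma_{t}^{oi}(\mathtt{S}(\cdot))$ increases by exactly the same amount as $\frac{4n-l-s}{3}$ does under Operations $F_1$, $F_2$ and $F_3$.

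For the ``only if'' direction, assume $\gamma_{t}^{oi}(\mathtt{S}(T))=\frac{4n(T)-l(T)-s(T)}{3}$. Lemma~\ref{lem-lower-bound-left} then yields $T\in\mathcal{F}$. Its proof goes by induction on $n(T)$, working along a diametral path $v_1v_2\cdots v_{d+1}$. It splits into three claims according to the degrees of $v_2$ and $v_3$, and each claim strips off a pendant $P_1$, $P_2$ or $P_3$. The lower bound of Theorem~\ref{theo-tree} is used to rule out the case where the attachment vertex $v_4$ is not a weak leaf of the reduced tree.

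Since both directions are already in place, nothing new needs to be proved here. The only point to check is that the two lemmas carry identical hypotheses: each is stated for an arbitrary nontrivial tree, including the base case $T=P_2$. Their conjunction is therefore exactly the claimed equivalence, and there is no real obstacle at this stage.
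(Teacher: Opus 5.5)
Your proposal is correct and matches the paper exactly: the theorem is stated there as an immediate consequence of Lemma~\ref{lem-lower-bound-right} (the ``if'' direction) and Lemma~\ref{lem-lower-bound-left} (the ``only if'' direction), and your summaries of how those lemmas are proved are accurate. One minor remark on Lemma~\ref{lem-lower-bound-left}, which you cite: its three diametral-path claims do not cover $T\cong P_3$ (where $|N_T(v_3)|=1$), and Claim~III excludes $T\cong P_4$ without justification, but both trees attain the bound and lie in $\mathcal{F}$ (obtained from $P_2$ by Operations $F_1$ and $F_2$), so the equivalence is unaffected.
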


\section{Trees $T$ with $\gamma_{t}^{oi}(\mathtt{S}(T))=\frac{4n(T)-l(T)+s(T)-2}{3}$}

To characterize the trees attaining the upper bound given in Theorem~\ref{theo-tree}, we introduce the following family of trees. Let $\mathcal{T}$ be the family of trees $T$ that can be obtained from a sequence of trees $T_0,\ldots, T_k=T$, with $k\geq 0$ and $T_0=P_2$. If $k\geq 1$, then for each $i\in \{1,\ldots, k\}$, the tree $T_i$ can be obtained from the tree $T'=T_{i-1}$ by one of the following three operations defined below.
\begin{description}
\item[Operation $O_1$:]  Attach a path $P_1$ to a vertex  in $\mathcal{S}(T')$.
\item[Operation $O_2$:]  Attach a path $P_3$ to a vertex  in $\mathcal{S}(T')\cup\mathcal{L}_w(T')$.
\item[Operation $O_3$:]  Add a tree $Q_r$ and identify its central vertex with a vertex in $\mathcal{L}_w(T')$.
\end{description}

\vspace{.2cm}

\noindent
We first show that every tree $T$ in the family $\mathcal{T}$ satisfies that $\gamma_{t}^{oi}(\mathtt{S}(T))=\frac{4n(T)-l(T)+s(T)-2}{3}$.

\begin{lemma}\label{lem-upper-bound-right}
Let $T$ be a nontrivial tree. If $T \in \mathcal{T}$, then $\gamma_{t}^{oi}(\mathtt{S}(T))=\frac{4n(T)-l(T)+s(T)-2}{3}$.
\end{lemma}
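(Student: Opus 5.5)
We follow the pattern of Lemma~\ref{lem-lower-bound-right} and argue by induction on the order of $T$. In the base case $T=P_2$, we have $\mathtt{S}(T)=P_3$, and a TOIDS of $P_3$ needs two vertices. Hence $\gamma_{t}^{oi}(\mathtt{S}(T))=2=(4\cdot 2-1+1-2)/3$, as required. For $n(T)\geq 3$, write $T=T_k$ with $k\geq 1$ and set $T'=T_{k-1}\in\mathcal{T}$. By the induction hypothesis, $\gamma_{t}^{oi}(\mathtt{S}(T'))=(4n(T')-l(T')+s(T')-2)/3$. We then split into three cases according to the operation used to obtain $T$ from $T'$. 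In each case the computation of $\gamma_t^{oi}$ is supplied by Lemma~\ref{lem-tree-attaching}, so the only work is to track how $n$, $l$ and $s$ change.

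\textbf{Operation $O_1$.} Lemma~\ref{lem-tree-attaching}-(i) gives an increase of $1$. Attaching a leaf to a support vertex gives $n(T)=n(T')+1$, $l(T)=l(T')+1$ and $s(T)=s(T')$. The bound then increases by $(4-1)/3=1$, as needed.

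\textbf{Operation $O_2$.} Lemma~\ref{lem-tree-attaching}-(iii) gives an increase of $4$, and $n(T)=n(T')+3$. There are two subcases according to where the path $P_3$ is attached.
\begin{itemize}
\item If it is attached to $v_k\in\mathcal{S}(T')$, the new end of the path is a new leaf and its neighbour is a new support vertex, while $v_k$ stays a support vertex. So $l(T)=l(T')+1$ and $s(T)=s(T')+1$, and the bound increases by $(12-1+1)/3=4$.
\item If it is attached to $v_k\in\mathcal{L}_w(T')$, then $v_k$ stops being a leaf and a new leaf appears, so $l(T)=l(T')$. The new support vertex is added, but the support neighbour $v_l$ of $v_k$ may lose its support status. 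Here we use that $v_k$ is a weak leaf: then $v_l$ is not a strong support vertex, so $v_k$ was its only leaf and $v_l$ is no longer a support vertex in $T$. Hence $s(T)=s(T')$, and the bound increases by $12/3=4$.
\end{itemize}

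\textbf{Operation $O_3$.} Lemma~\ref{lem-tree-attaching}-(iv) gives an increase of $4r$, and $n(T)=n(T')+3r$ because the central vertex is identified with $v_k$. The leaf $v_k$ disappears and $r$ new leaves appear, so $l(T)=l(T')+r-1$. The support neighbour of $v_k$ loses its support status, by the same weak-leaf argument, and $r$ new support vertices appear, so $s(T)=s(T')+r-1$. The bound therefore increases by $(12r-(r-1)+(r-1))/3=4r$.

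The main obstacle is the careful bookkeeping of the support count when a weak leaf $v_k$ stops being a leaf. In particular, we must justify that the unique neighbour of a weak leaf has no other leaf neighbour (the $P_2$ convention has to be checked separately). The small cases with $T'=P_2$ also need care, since Lemma~\ref{lem-tree-attaching} covers them but the leaf and support counts under the $P_2$ convention must be verified directly.
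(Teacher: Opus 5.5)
Your proposal is correct and is essentially the paper's proof: induction on $n(T)$, one case per operation, with Lemma~\ref{lem-tree-attaching}(i), (iii), (iv) supplying the increments and the same two subcases for $O_2$, so only the bookkeeping of $n$, $l$, $s$ remains. The $P_2$ case you flag but leave open closes at once: if $T'=P_2$ and $v_k$ is its leaf, then under $O_2$ or $O_3$ the other vertex of $P_2$ becomes a leaf and $v_k$ becomes a support vertex, so $l$ and $s$ each rise by one more than in your general count, leaving $s-l$ and hence the computation unchanged (the paper's proof passes over this case without comment).
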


\begin{proof}
Let $T$ be a tree belonging to the family $\mathcal{T}$. We proceed to prove that $\gamma_{t}^{oi}(\mathtt{S}(T))=(4n(T)-l(T)+s(T)-2)/3$ by induction on the order of $T$. If $n(T)=2$, then $T=P_2$ and $\gamma_{t}^{oi}(\mathtt{S}(T))=2=(4n(T)-l(T)+s(T)-2)/3$, as required. This particular case establishes the base case. Suppose that $n(T)\geq 3$ and that every tree $T^*$ in $\mathcal{T}$, with $2\leq n(T^*)< n(T)$, satisfies that $\gamma_{t}^{oi}(\mathtt{S}(T^*))=(4n(T^*)-l(T^*)+s(T^*)-2)/3$. Since $T\in \mathcal{T}$, it is clear that $T$ can be obtained from a sequence of trees $T_0,\ldots, T_k=T$, with $T_0=P_2$ and $k\geq 1$. Let $T'=T_{k-1}$, which implies that $T'\in \mathcal{T}$, and by induction hypothesis, it follows that $\gamma_{t}^{oi}(\mathtt{S}(T'))=(4n(T')-l(T')+s(T')-2)/3$. We consider the following three cases, depending on which operation is used to obtain the tree $T$ from $T'$.

\vspace{.3cm}

\noindent
\textbf{Case 1:} $T$ is obtained from $T'$ by Operation $O_1$. In this case, $T$ is formed by adding a vertex $v_1$ and the edge $v_1v_i$ where $v_i\in \mathcal{S}(T')$. By Lemma~\ref{lem-tree-attaching}-(i), the induction hypothesis, and the relations $n(T')=n(T)-1$, $s(T')=s(T)$, and $l(T')=l(T)-1$, we obtain the following desired equality:
\begin{equation*}
\begin{aligned}
 \gamma_{t}^{oi}(\mathtt{S}(T))=\gamma_{t}^{oi}(\mathtt{S}(T'))+1&=\tfrac{4n(T')-l(T')+s(T')-2}{3}+1\\[5pt]
&=\tfrac{4(n(T)-1)-(l(T)-1)+s(T)-2}{3}+1= \tfrac{4n(T)-l(T)+s(T)-2}{3}.
\end{aligned}
\end{equation*}

\vspace{.3cm}

\noindent
\textbf{Case 2:} $T$ is obtained from $T'$ by Operation $O_2$. In this case, $T$ is formed by adding the path $v_1v_2v_3$ and the edge $v_1v_i$ where $v_i\in \mathcal{S}(T')\cup\mathcal{L}_w(T')$. If $v_i\in\mathcal{L}_w(T')$, then $n(T')=n(T)-3$, $s(T')=s(T)$, and $l(T')=l(T)$. Hence, by Lemma~\ref{lem-tree-attaching}-(iii) and the induction hypothesis, it follows that
\begin{equation*}
\begin{aligned}
\gamma_{t}^{oi}(\mathtt{S}(T))=\gamma_{t}^{oi}(\mathtt{S}(T'))+4&=\tfrac{4n(T')-l(T')+s(T')-2}{3}+4\\[5pt]
&=\tfrac{4(n(T)-3)-l(T)+s(T)-2}{3}+4= \tfrac{4n(T)-l(T)+s(T)-2}{3}.
\end{aligned}
\end{equation*}

\noindent
Finally, if $v_i\in\mathcal{S}(T')$, then $n(T')=n(T)-3$, $s(T')=s(T)-1$, and $l(T')=l(T)-1$. Hence, by Lemma~\ref{lem-tree-attaching}-(iii) and the induction hypothesis, we obtain the following desired equality:
\begin{equation*}
\begin{aligned}
\gamma_{t}^{oi}(\mathtt{S}(T))
=\gamma_{t}^{oi}(\mathtt{S}(T'))+4
&=\tfrac{4n(T')-l(T')+s(T')-2}{3}+4 \\[5pt]
&=\tfrac{4(n(T)-3)-(l(T)-1)+(s(T)-1)-2}{3}+4
=\tfrac{4n(T)-l(T)+s(T)-2}{3}.
\end{aligned}
\end{equation*}

\vspace{.3cm}

\noindent
\textbf{Case 3:} $T$ is obtained from $T'$ by Operation $O_3$. In this case, $T$ is formed by identifying the central vertex of $Q_r$ with a vertex $v_i\in \mathcal{L}_w(T')$. By Lemma~\ref{lem-tree-attaching}-(iv), the induction hypothesis, and the relations $n(T')=n(T)-3r$, $s(T')=s(T)-r+1$, and $l(T')=l(T)-r+1$, we obtain the following desired equality:
\begin{equation*}
\begin{aligned}
 \gamma_{t}^{oi}(\mathtt{S}(T))=\gamma_{t}^{oi}(\mathtt{S}(T'))+4r&=\tfrac{4n(T')-l(T')+s(T')-2}{3}+4r\\[5pt]
&=\tfrac{4(n(T)-3r)-(l(T)-r+1)+(s(T)-r+1)-2}{3}+4r= \tfrac{4n(T)-l(T)+s(T)-2}{3}.
\end{aligned}
\end{equation*}

\vspace{.2cm}

\noindent
Therefore, and as a consequence of the three cases above, the proof is complete. 
\end{proof}

\vspace{.2cm}

\noindent
We next show that any tree $T$ with $\gamma_{t}^{oi}(\mathtt{S}(T))=(4n(T)-l(T)+s(T)-2)/3$ belongs to the family~$\mathcal{T}$.

\begin{lemma}\label{lem-upper-bound-left}
Let $T$ be a nontrivial tree. If $\gamma_{t}^{oi}(\mathtt{S}(T))=\frac{4n(T)-l(T)+s(T)-2}{3}$, then $T \in \mathcal{T}$.
\end{lemma}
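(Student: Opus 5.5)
We argue by induction on $n(T)$, in the same manner as Lemma~\ref{lem-lower-bound-left}. The base case $n(T)=2$ gives $T=P_2\in\mathcal{T}$. We should also dispose of the small trees by hand: for $P_3$, $P_4$, $P_5$ and stars we compute $\gamma_t^{oi}(\mathtt{S}(T))$ directly and either exhibit them via the operations (for instance $P_5$ arises from $P_2$ by $O_2$ at the weak leaf) or check that equality fails. For $n(T)\geq 3$ we fix a diametral path $v_1v_2\cdots v_{d+1}$ and split according to the degrees of $v_2$, $v_3$ and, when needed, $v_4$, always removing a pendant piece $T'$ that matches one of $O_1,O_2,O_3$. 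In each case we use the appropriate part of Lemma~\ref{lem-tree-attaching} together with the upper bound of Theorem~\ref{theo-tree} for $T'$, and we show that $T'$ also attains equality, so that induction applies.

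\textbf{Case A: $|N_T(v_2)|\geq 3$.} Then $v_2$ is a strong support vertex. We take $T'=T-\{v_1\}$, in which $v_2\in\mathcal{S}(T')$. Lemma~\ref{lem-tree-attaching}-(i) gives $\gamma_t^{oi}(\mathtt{S}(T'))=\gamma_t^{oi}(\mathtt{S}(T))-1$, and the parameter shifts ($n$ and $l$ both drop by one, $s$ is unchanged) show that equality transfers to $T'$. So $T\in\mathcal{T}$ by $O_1$.

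\textbf{Case B: $|N_T(v_2)|=2$, so $v_1$ is a weak leaf.} First we exclude $|N_T(v_3)|\geq 3$ with $v_3$ having another neighbour that is a support vertex or a leaf. If $v_3$ is itself a support vertex, or is adjacent to a second support vertex, then removing $\{v_1,v_2\}$ and applying Lemma~\ref{lem-tree-attaching}-(ii) (or the argument of Lemma~\ref{lem-equality-upper-bound}) produces $T'$ with $\gamma_t^{oi}(\mathtt{S}(T'))$ exceeding the upper bound, a contradiction. A direct version of this argument handles the case where strong leaves exist elsewhere.

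Hence, if $|N_T(v_3)|\geq 3$, every child of $v_3$ is, by diametrality, a support vertex of degree $2$ carrying a single leaf. This is excluded by the above, so $|N_T(v_3)|=2$ when $v_3$ has only $v_2$ as a child.

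\textbf{Case B1: $|N_T(v_3)|=2$.} Let $T'=T-\{v_1,v_2,v_3\}$. If $v_4\in\mathcal{S}(T')\cup\mathcal{L}_w(T')$, Lemma~\ref{lem-tree-attaching}-(iii) applies, equality transfers (using the two parameter computations from Lemma~\ref{lem-upper-bound-right}), and $T$ comes from $O_2$.

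Otherwise $v_4\notin\mathcal{S}(T')\cup\mathcal{L}_w(T')$. The children of $v_4$ other than $v_3$ then have depth at most $3$, and they are leaves (impossible, since then $v_4\in\mathcal{S}(T')$), supports, or copies of the $P_3$-branch $v_3v_2v_1$. If some child is a support vertex $w$ with a leaf, a TOIDS of $\mathtt{S}(T')$ containing $v_4$ (by Lemma~\ref{lem-tree}, since $v_4\in\mathcal{SS}(T')$) extends by only $3$ vertices. This gives $\gamma_t^{oi}(\mathtt{S}(T))\leq\gamma_t^{oi}(\mathtt{S}(T'))+3$, and since $s$ and $l$ each rise by one, this contradicts the bound for $T'$. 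So all children of $v_4$ are $P_3$-branches, $r\geq 2$ of them.

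\textbf{Case B2: all $r\geq 2$ children of $v_4$ are $P_3$-branches.} Let $T'$ be $T$ minus these branches, so that $v_4$ is a leaf of $T'$ (the subcase $T=Q_r$ plus nothing, where $v_4$ has no parent, must be checked directly and shown to fail equality, or to be covered). If $v_4\in\mathcal{L}_w(T')$, Lemma~\ref{lem-tree-attaching}-(iv) and the shifts $n'=n-3r$, $l'=l-r+1$, $s'=s-r+1$ transfer equality, and $O_3$ finishes.

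If instead $v_4\in\mathcal{L}_s(T')$, then $v_5$ is a strong support vertex in $T'$ and we build a cheaper set. In $\mathtt{S}(T)$ we take a $\gamma_t^{oi}(\mathtt{S}(T'))$-set containing $v_5$, which exists by Lemma~\ref{lem-tree}; it dominates $v^{4,5}$ without needing $v_4$. For the branches we add $\{v^{1,2}\text{-type},v_2\text{-type},v_3\text{-type}\}$ and $v^{3,4}$ for each branch, possibly saving through $v_4$. Counting, we expect this to give $\gamma_t^{oi}(\mathtt{S}(T))<(4n-l+s-2)/3$.

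\textbf{Main obstacle.} The delicate part is the bookkeeping in Case B2 when $v_4$ becomes a strong leaf, and the analogous exclusions in Case B, where the inequalities must be strict. These are the places where $\mathcal{L}_s$ matters and where $P_5$ is exceptional. We would first try to reduce them to Lemma~\ref{lem-equality-upper-bound}-style comparisons with $T-\{v_1,v_2\}$ or $T-\{v_1,v_2,v_3\}$, falling back on an explicit TOIDS construction if that fails.
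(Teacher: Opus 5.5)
Your overall scheme is the paper's: a diametral path, pendant reductions via Lemma~\ref{lem-tree-attaching}, and in each case either transferring equality to the smaller tree or contradicting the upper bound. There is, however, a genuine gap in Case~B1, where you exclude a support child $w$ of $v_4$.

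You claim that a TOIDS of $\mathtt{S}(T')$ containing $v_4$ extends to $\mathtt{S}(T)$ by only three vertices. This is false. The new part of $\mathtt{S}(T)$ is the pendant path $v^{3,4}v_3v^{2,3}v_2v^{1,2}v_1$. Every TOIDS must contain $v^{1,2}$ (to dominate $v_1$), one of $v_1,v_2$ (to dominate $v^{1,2}$), and one of $v^{2,3},v^{3,4}$ (to dominate $v_3$). With only these three, $v_3$ and the unused vertex of $\{v^{2,3},v^{3,4}\}$ are adjacent and both outside the set. So you only get $\gamma_t^{oi}(\mathtt{S}(T))\le\gamma_t^{oi}(\mathtt{S}(T'))+4$. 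Since $(n,l,s)$ grows by $(3,1,1)$, the bound $\frac{4n-l+s-2}{3}$ also grows by exactly $4$, and no contradiction follows. Concretely, take $T$ to be the path $v_1v_2v_3v_4u_3u_2u_1$ with a path $wy$ attached to $v_4$, so $T'\cong P_6$. Then $\gamma_t^{oi}(\mathtt{S}(T))=11$ while $\gamma_t^{oi}(\mathtt{S}(T'))=7$.

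The missing idea is the paper's leaf-shifting comparison: delete the leaf $y$ of $w$. In $T-\{y\}$, $w$ is a leaf and $v_4$ is a support vertex, so $l$ and $s$ are unchanged while $n$ drops by one. Take a $\gamma_t^{oi}(\mathtt{S}(T-\{y\}))$-set as in Lemma~\ref{lem-tree} that contains $v^{3,4}$ but not $w$, and replace $v^{4,w}$ by $w,v^{w,y}$. This gives $\gamma_t^{oi}(\mathtt{S}(T))\le\gamma_t^{oi}(\mathtt{S}(T-\{y\}))+1$, which violates the upper bound for $T-\{y\}$. The argument needs $\deg_T(w)=2$. That is exactly why the paper opens with Claim~I, deleting \emph{any} strong leaf (Lemma~\ref{lem-tree-attaching}-(i) and $O_1$), rather than only a strong support at $v_2$ as in your Case~A. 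In your setting $w$ may still be a strong support vertex, and ``a direct version of this argument'' does not cover it.

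There are also loose ends in Case~B2 and the ``otherwise'' branch of B1.
\begin{itemize}
\item The strong-leaf subcase of B2 is left at ``we expect''. No cheaper set is needed. The standard extension by $4r$ vertices (branch supports and semi-supports, the subdivision vertices between $v_4$ and the branches, and the support--leaf subdivision vertices) gives $\gamma_t^{oi}(\mathtt{S}(T))\le\gamma_t^{oi}(\mathtt{S}(T^*))+4r$. Because $v_5$ is still a support vertex of $T$, we have $n=n^*+3r$, $l=l^*+r-1$ and $s=s^*+r$. So the bound for $T$ exceeds that for $T^*$ by $4r+\frac{1}{3}$, a contradiction.
\item The same computation with $r=1$ is needed in B1 when $v_4\in\mathcal{L}_s(T')$. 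Your ``otherwise'' branch drops this case by jumping to $r\ge 2$.
\item The claim that the other height-two children of $v_4$ are copies of $v_3v_2v_1$ needs Cases~A and~B rerun along the diametral paths through them.
\end{itemize}
Once completed, your B2 computation has a real merit. It also handles the configuration where $v_5$ is the only support neighbour of $v_4$, which the paper's Case~2 argument does not literally cover, since that argument needs the chosen support neighbour $v_k$ to become a leaf of $T-\{v_l\}$.
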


\begin{proof}
Let $T$ be a tree with $\gamma_{t}^{oi}(\mathtt{S}(T))=\frac{4n(T)-l(T)+s(T)-2}{3}$. We proceed to prove that $T \in \mathcal{T}$ by induction on the order of $T$. If $n(T)=2$, then $T=P_2$, which belongs to $\mathcal{T}$. Suppose that $n(T)\geq 3$ and that every tree $T^*$ with $\gamma_{t}^{oi}(\mathtt{S}(T^*))=(4n(T^*)-l(T^*)+s(T^*)-2)/3$ and $2\leq n(T^*)< n(T)$ satisfies that $T^*\in \mathcal{T}$. Let $v_1v_2\cdots v_dv_{d+1}$ be a diametral path in $T$ ($d$ represents the diameter of $T$). From now on we assume that $T$ is a rooted tree with root $v_{d+1}$. We now proceed with the following claims.

\vspace{.2cm}

\noindent
{\bf Claim I:} If $\mathcal{L}_s(T)\neq \emptyset$, then $T\in \mathcal{T}$.

\vspace{.15cm}

\noindent
{\em Proof of Claim I.} Let $v_k\in \mathcal{L}_s(T)$ and let $v_l$ be its corresponding support vertex, that is, $N_T(v_k)=\{v_l\}$. Define $T'=T-\{v_k\}$. Observe that $v_l\in \mathcal{S}(T')$. By Lemma~\ref{lem-tree-attaching}-(i), it follows that $\gamma_{t}^{oi}(\mathtt{S}(T))=\gamma_{t}^{oi}(\mathtt{S}(T'))+1$. Using the hypothesis of the lemma and the relations $n(T)=n(T')+1$, $s(T)=s(T')$, and $l(T)=l(T')+1$, we obtain that
\begin{equation*}
\begin{aligned}
\gamma_{t}^{oi}(\mathtt{S}(T'))=\gamma_{t}^{oi}(\mathtt{S}(T))-1
&=\tfrac{4n(T)-l(T)+s(T)-2}{3}-1\\[5pt]
&=\tfrac{4(n(T')+1)-(l(T')+1)+s(T')-2}{3}-1=\tfrac{4n(T')-l(T')+s(T')-2}{3}.
\end{aligned}
\end{equation*} 

\noindent
Hence, by the induction hypothesis, it follows that $T'\in \mathcal{T}$. Therefore, $T$ can be obtained from $T'$ by Operation $O_1$, and consequently, $T\in \mathcal{T}$.

\vspace{.2cm}

\noindent
{\bf Claim II:} If $\mathcal{L}_s(T)=\emptyset$, then $T\in \mathcal{T}$.

\vspace{.15cm}

\noindent
{\em Proof of Claim II.} Observe that $|N_T(v_{2})|=2$. We first suppose that $|N_T(v_{3})|\geq 3$. Let $T''=T-\{v_1,v_2\}$. Since $v_3\in \mathcal{S}(T'')\cup \mathcal{SS}(T'')$, it follows by Lemma~\ref{lem-tree-attaching}-(ii) that $\gamma_{t}^{oi}(\mathtt{S}(T))=\gamma_{t}^{oi}(\mathtt{S}(T''))+2$.  By the hypothesis of the lemma and the relations $n(T)=n(T'')+2$, $s(T)=s(T'')+1$, and $l(T)=l(T'')+1$, we obtain that
\begin{equation*}
\begin{aligned}
\gamma_{t}^{oi}(\mathtt{S}(T''))=\gamma_{t}^{oi}(\mathtt{S}(T))-2
&=\tfrac{4n(T)-l(T)+s(T)-2}{3}-2\\[5pt]
&=\tfrac{4(n(T'')+2)-(l(T'')+1)+(s(T'')+1)-2}{3}-2>\tfrac{4n(T'')-l(T'')+s(T'')-2}{3},
\end{aligned}
\end{equation*} 
which contradicts the upper bound given in Theorem~\ref{theo-tree}. Hence, $|N_T(v_{3})|=2$. Now, let $T'=T-\{v_1,v_2,v_3\}$. It is straightforward to verify that $n(T')\geq 2$. 
Notice that any $\gamma_{t}^{oi}(\mathtt{S}(T'))$-set $D'$ can be extended to a TOIDS of $\mathtt{S}(T)$ by adding the set $\{v^{1,2},v_2,v_3,v^{3,4}\}$. Hence, $\gamma_{t}^{oi}(\mathtt{S}(T))\leq |D'\cup \{v^{1,2},v_2,v_3,v^{3,4}\}|=\gamma_{t}^{oi}(\mathtt{S}(T'))+4$. 
In addition, observe that $v_4\in \mathcal{N}_{SS}(T')\cup \mathcal{SS}(T')\cup\mathcal{S}(T')\cup\mathcal{L}(T')$. Next, we analyze the following cases.

\vspace{.1cm}

\noindent
\textbf{Case 1:}  $v_4\in \mathcal{S}(T')\cup\mathcal{L}(T')$. We first observe that if $v_4\in \mathcal{L}_s(T')$, then $n(T)=n(T')+3$, $s(T)=s(T')+1$, and $l(T)=l(T')$. Hence,
\begin{equation*}
\begin{aligned}
\gamma_{t}^{oi}(\mathtt{S}(T'))\geq\gamma_{t}^{oi}(\mathtt{S}(T))-4
&=\tfrac{4n(T)-l(T)+s(T)-2}{3}-4\\[5pt]
&=\tfrac{4(n(T')+3)-l(T')+(s(T')+1)-2}{3}-4>\tfrac{4n(T')-l(T')+s(T')-2}{3},
\end{aligned}
\end{equation*} 
which contradicts the upper bound given in Theorem~\ref{theo-tree}.
Therefore,  $v_4\in \mathcal{S}(T')\cup\mathcal{L}_w(T')$. By Lemma~\ref{lem-tree-attaching}-(iii) we have that $\gamma_{t}^{oi}(\mathtt{S}(T))=\gamma_{t}^{oi}(\mathtt{S}(T'))+4$. Now, let us consider the following two subcases.

\vspace{.15cm}

\noindent
\textbf{Subase 1.1:}  $v_4\in \mathcal{L}_w(T')$. In this subcase, we have that $n(T)=n(T')+3$, $s(T)=s(T')$, and $l(T)=l(T')$. By the hypothesis of the lemma and the previous equalities, we obtain that
\begin{equation*}
	\begin{aligned}
		\gamma_{t}^{oi}(\mathtt{S}(T'))=\gamma_{t}^{oi}(\mathtt{S}(T))-4
		&=\tfrac{4n(T)-l(T)+s(T)-2}{3}-4\\[5pt]
		&=\tfrac{4(n(T')+3)-l(T')+s(T')-2}{3}-4=\tfrac{4n(T')-l(T')+s(T')-2}{3}.
	\end{aligned}
\end{equation*} 

\vspace{.15cm}

\noindent
\textbf{Subcase 1.2:}  $v_4\in\mathcal{S}(T')$. In this subcase, we have that $n(T)=n(T')+3$, $s(T)=s(T')+1$, and $l(T)=l(T')+1$. By the hypothesis of the lemma and the previous equalities, we obtain that
\begin{equation*}
	\begin{aligned}
		\gamma_{t}^{oi}(\mathtt{S}(T'))=\gamma_{t}^{oi}(\mathtt{S}(T))-4
		&=\tfrac{4n(T)-l(T)+s(T)-2}{3}-4\\[5pt]
		&=\tfrac{4(n(T')+3)-(l(T')+1)+(s(T')+1)-2}{3}-4=\tfrac{4n(T')-l(T')+s(T')-2}{3}.
	\end{aligned}
\end{equation*} 

\vspace{.15cm}

\noindent
From the previous subcases, we conclude that $\gamma_{t}^{oi}(\mathtt{S}(T'))=\frac{4n(T')-l(T')+s(T')-2}{3}$. Hence, by the induction hypothesis, it follows that $T'\in \mathcal{T}$. Since $v_4\in \mathcal{S}(T')\cup\mathcal{L}_w(T')$, the tree $T$ can be obtained from $T'$ by Operation $O_2$, and consequently, $T\in \mathcal{T}$.

\vspace{.15cm}

\noindent
\textbf{Case 2:}  $v_4\in\mathcal{SS}(T')\cup \mathcal{N}_{SS}(T')$. We first suppose that $v_4\in\mathcal{SS}(T')$. Let $v_k\in N_T(v_4)\cap \mathcal{S}(T)$ and let us consider that $N_T(v_k)\cap \mathcal{L}(T)=\{v_l\}$. Define $T'''=T-\{v_l\}$. Let $D'''$ be a $\gamma_{t}^{oi}(\mathtt{S}(T'''))$-set which satisfies the condition given in Lemma~\ref{lem-tree}, and without loss of generality, assume that $|D'''\cap N_{\mathtt{S}(T''')}(v_4)|$ is maximum. Since $v_2,v_4\in \mathcal{S}(T''')$ and $v_3\in \mathcal{SS}(T''')$, it follows that $v_2,v^{1,2},v_3,v_4,v^{4,k}\in D'''$ and $v_1,v^{2,3},v_k\notin D'''$. This implies that $(D'''\setminus\{v^{4,k}\})\cup \{v_k,v^{k,l}\}$ is a TOIDS of $\mathtt{S}(T)$. Hence, $\gamma_{t}^{oi}(\mathtt{S}(T))\leq |(D'''\setminus\{v^{4,k}\})\cup \{v_k,v^{k,l}\}|=\gamma_{t}^{oi}(\mathtt{S}(T'''))+1$. Moreover, we have that $n(T)=n(T''')+1$, $s(T)=s(T''')$, and $l(T)=l(T''')$. Hence,
\begin{equation*}
\begin{aligned}
\gamma_{t}^{oi}(\mathtt{S}(T'''))\geq\gamma_{t}^{oi}(\mathtt{S}(T))-1
&=\tfrac{4n(T)-l(T)+s(T)-2}{3}-1\\[5pt]
&=\tfrac{4(n(T''')+1)-l(T''')+s(T''')-2}{3}-1>\tfrac{4n(T''')-l(T''')+s(T''')-2}{3},
\end{aligned}
\end{equation*} 
which contradicts the upper bound given in Theorem~\ref{theo-tree}. Therefore, $v_4\in \mathcal{N}_{SS}(T')$. By Lemma~\ref{lem-equality-upper-bound} we have that $|N_T(v_i)\cap \mathcal{S}(T)|=1$ for every vertex $v_i\in N_T(v_4)\setminus \{v_5\}$. This implies that $T_{v_4}$ is isomorphic to $Q_r$, with $r=|N_T(v_4)|-1$. Let $A=V(T_{v_4})\setminus \{v_4\}$ and define $T^*=T-A$. Note that $v_4\in \mathcal{L}(T^*)$. If $v_4\in \mathcal{L}_s(T^*)$, then $v_5\in \mathcal{S}(T^*)$, which in turn implies that $v_5\in\mathcal{S}(T')$. Hence, $v_4\notin \mathcal{N}_{SS}(T')$, which is a contradiction. Therefore, $v_4\in \mathcal{L}_w(T^*)$. By Lemma~\ref{lem-tree-attaching}-(iv) we have that $\gamma_{t}^{oi}(\mathtt{S}(T))=\gamma_{t}^{oi}(\mathtt{S}(T^*))+4r$.
Moreover, $n(T)=n(T^*)+3r$, $s(T)=s(T^*)+r-1$, and $l(T)=l(T^*)+r-1$. By the hypothesis of the lemma and the previous equalities, we obtain that
\begin{equation*}
\begin{aligned}
\gamma_{t}^{oi}(\mathtt{S}(T^*))=\gamma_{t}^{oi}(\mathtt{S}(T))-4r
&=\tfrac{4n(T)-l(T)+s(T)-2}{3}-4r\\[5pt]	&=\tfrac{4(n(T^*)+3r)-(l(T^*)+r-1)+(s(T^*)+r-1)-2}{3}-4r\\[5pt]
&=\tfrac{4n(T^*)-l(T^*)+s(T^*)-2}{3}.
\end{aligned}
\end{equation*} 

\noindent
Hence, by the induction hypothesis, it follows that $T^*\in \mathcal{T}$. Since $v_4\in \mathcal{L}_w(T^*)$, the tree $T$ can be obtained from $T^*$ by Operation $O_3$, and consequently, $T\in \mathcal{T}$.
\end{proof}

\noindent
As an immediate consequence of Lemmas~\ref{lem-upper-bound-right} and \ref{lem-upper-bound-left} we have the desired characterization.

\begin{theorem}
Let $T$ be a nontrivial tree. Then $\gamma_{t}^{oi}(\mathtt{S}(T))=\frac{4n(T)-l(T)+s(T)-2}{3}$ if and only if  $T\in \mathcal{T}$.
\end{theorem}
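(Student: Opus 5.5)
The theorem follows at once from the two preceding lemmas, so the plan is just to split the biconditional into its two directions. For the ``if'' direction, assume $T\in\mathcal{T}$ and invoke Lemma~\ref{lem-upper-bound-right}. That lemma was proved by induction along the defining sequence $T_0=P_2,\ldots,T_k=T$, and it gives $\gamma_{t}^{oi}(\mathtt{S}(T))=\frac{4n(T)-l(T)+s(T)-2}{3}$ directly. For the ``only if'' direction, assume the equality holds for the nontrivial tree $T$ and invoke Lemma~\ref{lem-upper-bound-left}, which yields $T\in\mathcal{T}$.

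If one instead wanted to rebuild the argument, the order would be as follows.

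First, for sufficiency, I would use Lemma~\ref{lem-tree-attaching}(i), (iii) and (iv). These show that each of $O_1$, $O_2$ and $O_3$ adds exactly $1$, $4$ and $4r$ respectively to $\gamma_{t}^{oi}(\mathtt{S}(\cdot))$. I would then check that the corresponding changes in $n$, $l$ and $s$ leave the quantity $4n-l+s-2$ shifted by exactly three times that amount.

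Second, for necessity, I would run induction on $n(T)$ along a diametral path $v_1\cdots v_{d+1}$.
\begin{itemize}
\item If $\mathcal{L}_s(T)\neq\emptyset$, remove a strong leaf; this reverses $O_1$.
\item Otherwise, first rule out $\deg v_3\geq 3$ by comparing with Theorem~\ref{theo-tree}.
\item With $\deg v_3=2$, remove $v_1,v_2,v_3$ and classify $v_4$ in $T'$:
  \begin{itemize}
  \item if $v_4\in\mathcal{S}(T')\cup\mathcal{L}_w(T')$, this reverses $O_2$;
  \item a strong leaf, or $v_4\in\mathcal{SS}(T')$, each contradict the upper bound;
  \item the remaining case $v_4\in\mathcal{N}_{SS}(T')$ reverses $O_3$, using Lemma~\ref{lem-equality-upper-bound} to see that $T_{v_4}\cong Q_r$.
  \end{itemize}
\end{itemize}

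The only delicate step, already settled in Lemma~\ref{lem-upper-bound-left}, is the $\mathcal{SS}$ case. There one needs the exchange argument on a $\gamma_{t}^{oi}$-set of $\mathtt{S}(T-\{v_l\})$, where $v_l$ is the leaf of a support neighbour $v_k$ of $v_4$, showing that the parameter drops by at most one. Since both lemmas are available, the theorem itself needs no further work beyond citing them.
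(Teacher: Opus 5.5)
Your proposal is correct and is the paper's own argument: the theorem is stated as an immediate consequence of Lemma~\ref{lem-upper-bound-right} (sufficiency, via Lemma~\ref{lem-tree-attaching}(i), (iii), (iv)) and Lemma~\ref{lem-upper-bound-left} (necessity, via the diametral-path case analysis you outline). One caution if you ever rebuild the $\mathcal{SS}$ step rather than cite it: the exchange on $\mathtt{S}(T-\{v_l\})$ as written assumes that $v_k$ becomes a leaf there, so that $v_4\in\mathcal{S}(T-\{v_l\})$ and $l,s$ are unchanged; this means $\deg_T(v_k)=2$, which is not automatic (e.g.\ $v_k=v_5$ may have further neighbours), so that configuration needs a separate argument.
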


%

\end{document}